\newtheorem{lem}  {Lemma}
\newtheorem{pro}[lem]    {Proposition}
\newtheorem{thm}[lem]    {Theorem}
\newtheorem{cor}[lem]    {Corollary}
\newtheorem{rem}{Remark}
\newtheorem{exa}     {Example}
\newcommand {\den} {{e}_\eps}
\newcommand{\R}{\mathbb{R}}
\newcommand{\eps}{\varepsilon}
\newcommand{\e}{\varepsilon}
\newcommand{\h}{{\mathcal{H}}}
\newcommand{\proof}[1]{\par\medskip\noindent{\bf Proof#1.}}
\newcommand{\qed}{\hfill$\square$}
\newcommand{\di}{\nabla \cdot}
\newcommand{\be}{\begin{equation}}
\newcommand{\ee}{\end{equation}}
\newcommand{\nd}{\noindent}
\newcommand{\Om}{\Omega}
\newcommand{\dOm}{\partial\Omega}
\newcommand{\dist}{\mathop{\rm dist \,}}
\newcommand{\f}{\varphi}
\newcommand{\degr}{\operatorname{deg}}
\newcommand{\norm}[1]{\left\Vert #1 \right\Vert}
\newcommand{\abs}[1]{\left\vert #1\right\vert}
\newcommand {\bun}{\alpha_1}
\newcommand {\bd}{\alpha_2}
\newcommand {\cd}{\mathcal{D}}
\title{Global uniform estimate for the modulus of $2D$ Ginzburg-Landau  vortexless solutions with asymptotically infinite boundary energy}
\author{Radu Ignat\footnote{Institut de Math\'ematiques de Toulouse \& Institut Universitaire de France, UMR 5219, Universit\'e de Toulouse, CNRS, UPS
IMT, F-31062 Toulouse Cedex 9, France. Email: Radu.Ignat@math.univ-toulouse.fr} 
\and  Matthias Kurzke\footnote{School of Mathematical Sciences, University of Nottingham, Nottingham NG7 2RD, United Kingdom. Email: matthias.kurzke@nottingham.ac.uk}
 \and Xavier Lamy \footnote{Institut de Math\'ematiques de Toulouse, UMR 5219, Universit\'e de Toulouse, CNRS, UPS
IMT, F-31062 Toulouse Cedex 9, France. Email: Xavier.Lamy@math.univ-toulouse.fr}}
\begin{document}
\maketitle

\begin{abstract}
For $\eps>0$, let $u_\eps:\Omega\to \R^2$ be a solution of the Ginzburg-Landau system $$-\Delta u_\eps=\frac1{\eps^2} u_\eps (1-|u_\eps|^2)$$ in a Lipschitz bounded domain $\Omega$. 
 In an energy regime that excludes interior vortices,
we prove that $1-|u_\eps|$ is uniformly estimated by a positive power of $\eps$ {\bf globally} in $\Omega$ provided that the energy of $u_\eps$ at the boundary $\partial \Om$ does not grow faster
 than $\eps^{-\alpha}$ with $\alpha\in (0,1)$.
\end{abstract}

\section{Introduction}
Let $\Omega\subset \R^2$ be a Lipschitz bounded open connected set (not necessarily simply connected) with the unit outer normal  and tangent vector fields $(\nu, \tau)$ defined a.e. on $\partial \Omega$ with $$\tau=\nu^\perp=(-\nu_2, \nu_1)$$ so that $(\nu, \tau)$ forms an oriented frame a.e. on $\dOm$. For every small $\eps>0$, 
let $u_\eps:\Omega \to \R^2$ be a solution of the Ginzburg-Landau system:
\be
\label{eq:euler}
\left\lbrace
\begin{aligned}
-\Delta u_\eps&=\frac1{\eps^2} u_\eps (1-|u_\eps|^2) & \textrm{in } \Omega,\\
u_\eps&=g_\eps & \textrm{on } \partial \Omega
\end{aligned}
\right.
\ee
with the boundary data
$g_\eps:\partial \Omega \to \R^2$. 
 For the boundary energy
\be
\label{bdry}
N_\eps:=\int_{\partial \Omega} \frac12|\partial_\tau g_\eps|^2 +\frac1{4\eps^2} (1-|g_\eps|^2)^2\, d\h^1
\ee
and  the interior energy
\be
\label{int}
M_\eps:=\int_{\Omega} \frac12|\nabla u_\eps|^2 + \frac1{4\eps^2}(1-|u_\eps|^2)^2\, dx,\ee
we assume that there exists a power $\alpha\in (0,1)$ such that \footnote{We  write $a\ll b$ if $\frac a b\to 0$, and $a\lesssim b$ is $\frac ab$ is bounded by a universal constant.}
\be
\label{regime}
M_\eps\le  \kappa |\log \eps| \quad \textrm{and} \quad N_\eps\ll \frac 1{\eps^\alpha} \quad \textrm{as } \, \eps\to 0,
\ee
 for some small constant $\kappa>0$ depending on the Lipschitz regularity of $\Omega$.
The first condition in \eqref{regime} avoids nucleation of interior vortices of non-vanishing winding number (because the energetic cost of an interior vortex of non-zero winding number 
is of order $|\log \eps|$, see the seminal book of Bethuel-Brezis-H\'elein \cite{BethuelBrezisHelein:1994a}). The second condition in \eqref{regime} corresponds to an energetic regime avoiding
 the presence of boundary vortices: indeed, a transition of $g_\eps$ between two opposite directions at the boundary on a distance $\eps$ 
(the length scale of a vortex) has an energetic cost of order $\frac1\eps$ (see Example \ref{ex1} below). If $N_\e\lesssim \frac 1\e$, then solutions $u_\eps$ of \eqref{eq:euler} may have zeros on the boundary (see Proposition \ref{p:optimal2}). 
 
\subsection{Main result}

Our main result is the following global uniform estimate in the regime \eqref{regime} for the convergence of $\abs{u_\e}$ to $1$ in $\Omega$, which means that $1-|u_\eps|$ behaves as a positive power of $\eps$.

\begin{thm}\label{thm:main}
Let $\Omega\subset \R^2$ be a Lipschitz bounded domain. There exists a (small) constant $\kappa>0$ depending on the Lipschitz regularity of $\Omega$ such that 
for every solution $u_\e$ of \eqref{eq:euler} satisfying \eqref{regime}  for some $\alpha\in (0,1)$
 we have the following global estimate \footnote{We denote by $a+$ (resp. $a-$) any number strictly bigger than $a$ (resp. strictly smaller than $a$) that one can think of as close to $a$.  The constants 
in inequalities involving $a+$ or $a-$ may depend on the choice of these numbers.}
 $$\sup_{\Omega}\big|{|u_\eps|}-1\big|\leq C \bigg(\eps^{1-}(1+N_\eps+ M_\eps)(1+M_{\eps})^{\frac12-}\bigg)^{\frac16-} \quad \textrm{ as } \quad \eps\to 0,$$
 for some constant $C>0$ depending only on the Lipschitz regularity \footnote{In fact, $C>0$ depends only on the lowest angle and lowest radius of interior and exterior cones at any point of the Lipschitz  boundary $\partial \Om$.  } of $\Omega$. In particular, $g_\eps$ has zero winding number on $\dOm$, i.e.,\footnote{In general, $\dOm$ is not connected; the definition of the degree is coherent with the choice of the orientation $\tau=\nu^\perp$ given by the outer normal field $\nu$.}
$$\degr(g_\eps, \dOm):=\frac1{2\pi} \int_{\dOm} \frac{g_\e^\perp}{|g_\e|} \cdot \partial_\tau \bigg(\frac{g_\e}{|g_\e|}\bigg) \, d\h^1=0.$$ 
\end{thm}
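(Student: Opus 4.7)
The plan is to study the scalar $f := 1-|u_\eps|^2$, which satisfies the linear elliptic equation
\begin{equation*}
-\Delta f + \frac{2|u_\eps|^2}{\eps^2}\, f \;=\; 2|\nabla u_\eps|^2 \;\ge\; 0 \quad\text{in } \Omega,
\qquad f|_{\dOm} = 1-|g_\eps|^2.
\end{equation*}
Directly from $M_\eps$ and $N_\eps$ one obtains the $L^2$ bounds $\|f\|_{L^2(\Omega)}^2 \le 4\eps^2 M_\eps$ and $\|f\|_{L^2(\dOm)}^2 \le 4\eps^2 N_\eps$, together with $\|\partial_\tau g_\eps\|_{L^2(\dOm)}^2 \le 2 N_\eps$. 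The goal is to upgrade these $L^2$ bounds, using the PDE structure and the strong zero-order coefficient of order $1/\eps^2$, into a pointwise bound on $f$ of the stated form.

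The argument proceeds in two stages. First, I would bound $f$ on the boundary: on each (1D) connected component of $\dOm$, a 1D Gagliardo--Nirenberg interpolation $\|f\|_{L^\infty(\dOm)}\lesssim \|f\|_{L^2(\dOm)}^{1/2}\|\partial_\tau f\|_{L^2(\dOm)}^{1/2}$, combined with the pointwise inequality $|\partial_\tau f|\le 2|g_\eps||\partial_\tau g_\eps|$, gives a bound in terms of $\|g_\eps\|_{L^\infty(\dOm)}$ and $N_\eps$; since $\|g_\eps\|_{L^\infty(\dOm)}$ is itself controlled by $N_\eps$ (via the 1D embedding $H^1\hookrightarrow C^{1/2}$ applied to $g_\eps$, together with the $L^2$-closeness of $|g_\eps|$ to $1$ coming from $N_\eps$), this yields a boundary uniform bound tending to $0$ under \eqref{regime}. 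Second, for the interior, I would combine a global gradient estimate $\|\nabla u_\eps\|_{L^\infty(\Omega)}\lesssim 1/\eps$ -- standard by rescaling \eqref{eq:euler} on balls of radius $\eps$ in the interior, and extended up to $\dOm$ through the cone condition -- with the $L^2$ bound on $f$ and the coercivity of the PDE. A 2D Gagliardo--Nirenberg interpolation such as $\|f\|_{L^\infty}\lesssim \|f\|_{L^2}^{1/2}\|\nabla f\|_{L^\infty}^{1/2}$ yields only a bounded but not yet small estimate of $\|f\|_{L^\infty}$; a second bootstrap exploiting the strongly coercive zero-order term $2|u_\eps|^2/\eps^2$ in the PDE for $f$ (via a Moser-type iteration, or direct comparison with the Green's function of $-\Delta+c/\eps^2$, which is exponentially localized on scale $\eps$) is then required to convert this into the stated positive power of $\eps$.

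The main obstacles are twofold. The first is the propagation of elliptic regularity, and in particular the gradient bound, all the way up to the Lipschitz boundary $\dOm$: this prevents any appeal to Schauder-type boundary estimates and must be carried out with constants depending only on the interior/exterior cone parameters, which matches the dependence of the constant in the theorem. The second, and principal, is the quantitative bootstrap trading off the $L^2$-smallness of $f$ against the pointwise gradient bound and the coercivity of the massive term in the PDE; the exponent $\tfrac16$ in the conclusion very likely originates from such a three-way balance between the boundary trace, the interior energy $M_\eps$, and the gradient bound. Once $\||u_\eps|-1\|_{L^\infty}<1$ is established for $\eps$ small, the map $u_\eps/|u_\eps|$ is a well-defined continuous function $\bar\Omega\to S^1$ extending $g_\eps/|g_\eps|$, and Stokes' theorem applied to the closed 1-form $d\arg(u_\eps/|u_\eps|)$ implies that the total winding number of $g_\eps$ over $\dOm$ vanishes.
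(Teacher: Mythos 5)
Your overall strategy---work with $f=1-|u_\eps|^2$, control it on $\dOm$ via the one-dimensional interpolation, then try to upgrade the interior $L^2$ bound $\|f\|_{L^2(\Omega)}\lesssim \eps M_\eps^{1/2}$ to $L^\infty$ using the massive zero-order term---identifies the right object, and your Stage~1 is essentially the paper's estimate $\delta_\eps\lesssim\sqrt{\eps N_\eps}$ together with the maximum principle (which, note, only yields the \emph{upper} bound $|u_\eps|\le 1+\delta_\eps$; the hard direction is the lower bound). The first genuine gap is the claimed global gradient bound $\|\nabla u_\eps\|_{L^\infty(\Omega)}\lesssim 1/\eps$ up to $\dOm$: the boundary data is only assumed to lie in $H^1(\dOm)$, which embeds into $C^{0,1/2}$ but not into the Lipschitz class, so no rescaling or cone argument can produce a Lipschitz bound at the boundary---the paper points this out explicitly in the remark following Lemma~\ref{lemma:maximum}. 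The correct substitute, proved there via $H^{3/2-}$ elliptic estimates in Lipschitz domains, is the modulus of continuity $|u_\eps(x)-u_\eps(y)|\le C(|x-y|/\eps)^{1/2-}$, and this weaker exponent is precisely the origin of the $\tfrac16$: if $|1-|u_\eps(x_0)|^2|=m$, then $|1-|u_\eps|^2|\gtrsim m$ on a ball of radius $\sim m^{2+}\eps$, whence $m^{6+}\lesssim \eps^{-2}\int(1-|u_\eps|^2)^2$. So this point cannot be treated as a technicality.

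The second and principal gap is that the ``second bootstrap'' is not supplied and, as described, cannot work. The source $2|\nabla u_\eps|^2$ in the equation for $f$ is nonnegative but not small: it is only $O(M_\eps)=O(|\log\eps|)$ in $L^1$ and $O(\eps^{-2})$ pointwise. Since the Green's function of $-\Delta+c\,\eps^{-2}$ has total mass $\sim\eps^2$ concentrated at scale $\eps$, comparison gives $f(x)\lesssim \eps^{-2}\cdot\eps^2=O(1)$ near a point where the gradient saturates its pointwise bound---exactly the ``bounded but not small'' conclusion you already reached; a Moser iteration meets the same obstruction. The paper's resolution is structural rather than iterative: it first establishes $|u_\eps|\ge \tfrac12$ by an a priori argument combining the H\"older estimate above with a localized Pohozaev identity (giving $\eps^{-2}\int_{B(x_0,r_0)\cap\Omega}(1-|u_\eps|^2)^2\lesssim (1+M_\eps)/|\log\eps|$ on well-chosen radii $r_0\in(\eps^{\alpha_1},\eps^{\alpha_2})$), then writes $u_\eps=\rho_\eps e^{i\varphi_\eps}$ on good rectangles, multiplies the $\rho_\eps$-equation by $1-\rho_\eps^2$ and integrates by parts. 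In the resulting identity the $|\nabla\rho_\eps|^2$ part of the source enters with a favorable sign, and the dangerous part $\int\rho_\eps(1-\rho_\eps^2)|\nabla\varphi_\eps|^2$ is small because $\nabla\varphi_\eps$ is bounded in $L^{4-}$ (elliptic estimates for $\di(\rho_\eps^2\nabla\varphi_\eps)=0$) while $1-\rho_\eps^2$ is of order $\eps^{1-}$ in $L^{2+}$; this is what produces the factor $\eps^{1-}(1+N_\eps+M_\eps)(1+M_\eps)^{1/2-}$ inside the $\tfrac16-$ power. Nothing in your proposal plays this role, and the exponent $\tfrac16$ is asserted rather than derived. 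Your closing degree argument is fine once the uniform estimate is in hand.
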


 We believe that the power $\frac16-$ of $\eps$ in the above estimate is not optimal; moreover, the optimal power of $\eps$ is expected to be $\leq \frac12$ (see \eqref{delta} below). The proof of Theorem~\ref{thm:main} is done in several steps. In Section~\ref{s:apriori}, we obtain a preliminary estimate of the uniform convergence of $\abs{u_\e}$ to $1$, but at a much slower rate than the one claimed in Theorem~\ref{thm:main}. Thanks to this preliminary estimate, in Section~\ref{s:proof}, we  will be able to use more efficiently the Ginzburg-Landau system \eqref{eq:euler} to deduce an improved rate for the convergence of $\abs{u_\e}$ to $1$, first in the $L^2$-norm and then in the $L^\infty$-norm.

\medskip

Let us discuss the optimality of the assumption \eqref{regime} in Theorem \ref{thm:main}. First,
the assumption on $M_\eps$ in \eqref{regime} is optimal: if the constant $\kappa$ is not small enough, then solutions $u_\e$ of \eqref{eq:euler} may vanish inside $\Omega$. Moreover, the threshold value of $\kappa$ at which this happens can be arbitrarily small depending on the Lipschitz regularity of the domain:

\begin{pro}\label{p:optimal}
For any $\theta_0\in(0, \pi)$  and any $\eta>0$ there exists a cone shape domain $\Omega$ of opening angle $\theta_0$, an exponent $\alpha\in(0,1)$ 
 and a solution $u_\eps$ of the
Ginzburg-Landau system \eqref{eq:euler} such that for small $\e>0$, $u_\eps(P_\e)=0$ for a degree-one vortex point $P_\e\in \Omega$  and
\eqref{regime} holds true for $\kappa=\frac{\theta_0}{2}+\eta$.
\end{pro}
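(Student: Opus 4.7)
I would take $\Omega$ to be the truncated sector $\{(r,\theta):0<r<1,\ |\theta|<\theta_0/2\}$ of opening angle $\theta_0$ with vertex at the origin. Fix $\beta\in(0,1)$ close to $1$ (tuned to $\eta$ and $\theta_0$ below) and set $a_\eps:=\eps^\beta e_1$, which lies on the bisector of the wedge. Prescribe the Dirichlet datum
$$g_\eps(z):=\frac{z-a_\eps}{|z-a_\eps|}, \qquad z\in\partial\Omega,$$
so that $|g_\eps|\equiv 1$ and $\degr(g_\eps,\partial\Omega)=+1$ (since $a_\eps\in\Omega$). Let $u_\eps$ be any minimizer of the Ginzburg-Landau energy in $H^1(\Omega;\R^2)$ subject to this trace; existence is standard, and $u_\eps$ solves \eqref{eq:euler}.

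Because $|g_\eps|\equiv 1$ the potential part of $N_\eps$ drops and $|\partial_\tau g_\eps|\le|\nabla g_\eps|=|z-a_\eps|^{-1}$; parametrizing the two edges as $s\mapsto se^{\pm i\theta_0/2}$ and integrating (plus the bounded arc contribution) gives $N_\eps\lesssim 1/\dist(a_\eps,\partial\Omega)\sim \eps^{-\beta}$, so any $\alpha\in(\beta,1)$ achieves $N_\eps\ll \eps^{-\alpha}$. For $M_\eps$ I use the competitor $v_\eps(z):=\rho(|z-a_\eps|/\eps)\,(z-a_\eps)/|z-a_\eps|$ with a standard profile $\rho$ vanishing at $0$ and equal to $1$ past $1$, which matches $g_\eps$ on $\partial\Omega$. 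Splitting its Ginzburg-Landau energy into the vortex core $B_\eps(a_\eps)$ (contribution $O(1)$), the near-field annulus $B_{\eps^\beta}(a_\eps)\setminus B_\eps(a_\eps)$ entirely contained in $\Omega$ (where $\tfrac12|\nabla\arg(z-a_\eps)|^2$ integrates to $\pi(1-\beta)|\log\eps|+O(1)$), and the far-field $\Omega\setminus B_{\eps^\beta}(a_\eps)$ on which circles around $a_\eps$ meet $\Omega$ in an arc of angular length essentially $\theta_0$ (contribution $(\theta_0/2)\beta|\log\eps|+O_{\theta_0}(1)$), minimality yields
$$M_\eps \;\le\; \Bigl(\tfrac{\theta_0}{2} + \bigl(\pi-\tfrac{\theta_0}{2}\bigr)(1-\beta)\Bigr)|\log\eps| + O_{\theta_0}(1),$$
which is at most $(\theta_0/2+\eta)|\log\eps|$ for $\eps$ small as soon as $(\pi-\theta_0/2)(1-\beta)<\eta/2$; since $\pi-\theta_0/2>0$, such a $\beta<1$ does exist.

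Finally, since $\Omega$ is simply connected and $\degr(g_\eps,\partial\Omega)=1$, the map $u_\eps$ must vanish somewhere: otherwise $u_\eps/|u_\eps|$ would lift to an $\R$-valued function on $\Omega$, contradicting the boundary degree. Under the logarithmic energy bound, the Bethuel-Brezis-H\'elein vortex-ball analysis \cite{BethuelBrezisHelein:1994a} localizes the zero set in a finite family of small balls with integer local degrees $d_j$ summing to $+1$; the lower bound $\pi d_j^2|\log\eps|(1-o(1))$ per vortex (reduced to the sector cost $\gtrsim (\theta_0/2) d_j^2|\log\eps|$ near the corner) rules out any configuration other than a single $d=+1$ vortex once $\eta$ is small enough, so picking $P_\eps$ to be a zero of $u_\eps$ in this unique ball concludes the proof. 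The main delicate point is the competitor computation near the vertex; it is made tractable by the fact that $g_\eps$ is \emph{exactly} the phase used in the vortex ansatz, so no interpolation is needed on $\partial\Omega$ and the energy of $v_\eps$ reduces to the explicit zonal computation above.
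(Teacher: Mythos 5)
Your route is genuinely different from the paper's, and the difference matters. The paper does not minimize anything: it takes the \emph{explicit entire solution} $u_\eps(x)=f(|x-P_\eps|/\eps)\,\frac{x-P_\eps}{|x-P_\eps|}$ built from the standard radial profile $f$, restricts it to the cone, and lets $g_\eps$ be its own trace. This makes $u_\eps$ a solution of \eqref{eq:euler} by construction, with an explicit isolated zero of local degree one at $P_\eps$, and the whole proof reduces to the energy bookkeeping (which your competitor computation essentially reproduces: your $\beta$ plays the role of the paper's $\mu$, and your bound $M_\eps\le(\frac{\theta_0}{2}+(\pi-\frac{\theta_0}{2})(1-\beta))|\log\eps|+O(1)$ matches the paper's $(\pi(1-\mu)+\frac{\theta_1}{2}\mu)|\log\eps|+C$; your $N_\eps\lesssim\eps^{-\beta}$ and the choice $\alpha\in(\beta,1)$ likewise match). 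One small inaccuracy in your computation: $B_{\eps^\beta}(a_\eps)$ is \emph{not} contained in $\Omega$, since $\dist(a_\eps,\partial\Omega)=\eps^\beta\sin\frac{\theta_0}{2}<\eps^\beta$; you should use $B_{\eps^\beta\sin(\theta_0/2)}(a_\eps)$, which only changes the $O_{\theta_0}(1)$ terms.

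The genuine gap is in your last step. Producing \emph{some} zero of the minimizer by the lifting/degree argument is fine, but the proposition asks for a \emph{degree-one vortex point}, and your justification of that is only a sketch that does not close. First, the vortex-ball lower bounds you invoke are stated for interior balls or smooth domains; here the winding of $g_\eps$ concentrates at distance $\eps^\beta$ from a corner, the minimizer's zeros will sit at distance $o(1)$ (possibly $O(\eps)$) from the vertex, and a corner-adapted lower bound of the form $\gtrsim\frac{\theta_0}{2}\sum_j|d_j|\,|\log\eps|$ must actually be proved, not asserted. Second, even granting such a bound, energy comparison cannot exclude extra dipoles of zeros at mutual distance $O(\eps)$ (a $+1/-1$ pair at scale $\eps$ costs only $O(1)$), so you cannot conclude that the zero set is a single point with local degree one; at best you get a small ball containing all zeros with total boundary degree one. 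This is why the paper's device of an explicit exact solution is the right move: it delivers the degree-one vortex point for free and avoids any lower-bound or vortex-structure analysis. Your argument could be repaired either by switching to the explicit solution, or by carrying out a corner-adapted ball construction and then settling for the weaker ``ball of degree one'' formulation.
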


Second,
the assumption on $N_\e$ in \eqref{regime} is near-optimal in the following sense: if $N_\eps\lesssim \frac 1\e$, then a solution $u_\eps$ of \eqref{eq:euler} may have zeros at the boundary of any Lipschitz domain $\Omega$. 

\begin{pro}\label{p:optimal2}
For any Lipschitz domain $\Omega$, there exists a solution $u_\eps$ of the Ginzburg-Landau system \eqref{eq:euler}  such that for small $\e>0$, $u_\e(x_0)=0$ for some $x_0\in\dOm$, while 
$M_\e\lesssim 1$ and $N_\e \lesssim \frac1\e$.
\end{pro}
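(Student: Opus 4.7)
The plan is to reduce the Ginzburg--Landau system to its scalar counterpart and construct an explicit competitor. If the boundary data $g_\eps$ takes values in the single line $\R e_1 \subset \R^2$, then the ansatz $u_\eps = v_\eps e_1$ transforms \eqref{eq:euler} into the scalar problem $-\Delta v_\eps = \eps^{-2} v_\eps (1-v_\eps^2)$ in $\Omega$ with $v_\eps = g_\eps\cdot e_1$ on $\partial\Omega$, and every critical point of the scalar Ginzburg--Landau energy is automatically a critical point of the vector one (variations in the $e_2$ direction yield vanishing first variation because the reference configuration has its second component equal to zero). So the whole task reduces to producing a scalar solution that vanishes at some boundary point and has bounded interior energy.

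Fix an arbitrary $x_0 \in \partial\Omega$ and take as competitor the Lipschitz function
$$\tilde v_\eps(x) := \min\!\big(1,\, |x-x_0|/\eps\big), \qquad x \in \overline{\Omega},$$
which vanishes at $x_0$, equals $1$ outside $B_\eps(x_0)$, and satisfies $|\nabla\tilde v_\eps|\le 1/\eps$. Both integrands in the scalar energy are of order $\eps^{-2}$ on a set of area $\lesssim \eps^2$ and vanish elsewhere, yielding
$$\int_\Omega \tfrac12 |\nabla \tilde v_\eps|^2 + \tfrac1{4\eps^2}(1-\tilde v_\eps^2)^2 \, dx \;\lesssim\; 1.$$
Using a Lipschitz parameterization of $\partial\Omega$ near $x_0$ (so that $\h^1(\partial\Omega\cap B_\eps(x_0)) \lesssim \eps$), the analogous one-dimensional estimate applied to the trace shows that $g_\eps := (\tilde v_\eps|_{\partial\Omega})\,e_1$ satisfies $N_\eps \lesssim 1/\eps$.

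The direct method then produces a minimizer $v_\eps$ of the scalar Ginzburg--Landau energy in $\tilde v_\eps + H^1_0(\Omega)$, whose energy is bounded by that of the competitor, hence $\lesssim 1$. Setting $u_\eps := v_\eps e_1$ gives a solution of \eqref{eq:euler} with $M_\eps \lesssim 1$. To conclude $u_\eps(x_0) = 0$ pointwise, I would invoke the scalar maximum principle ($|v_\eps|\le 1$ in $\Omega$) and standard elliptic regularity: every point of a Lipschitz boundary is Wiener-regular, so $v_\eps\in C(\overline{\Omega})$ takes its continuous boundary value $0$ at $x_0$. The only mildly delicate ingredient is this boundary continuity on a general (possibly cornered) Lipschitz domain; the rest is explicit computation.
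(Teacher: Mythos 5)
Your proposal is correct and follows essentially the same route as the paper: boundary data vanishing at $x_0\in\partial\Omega$, parallel to a fixed direction, with an $\eps$-scale transition, the solution obtained as a minimizer, $M_\eps\lesssim 1$ by comparison with the explicit competitor, and $N_\eps\lesssim \frac 1\eps$ by direct computation. The paper simply minimizes the full vector energy directly (so your scalar reduction is a harmless but unnecessary detour), and it leaves implicit the boundary-regularity point that you rightly make explicit in order to conclude $u_\eps(x_0)=0$ pointwise.
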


The proofs of Propositions~\ref{p:optimal} and \ref{p:optimal2} can be found in Section~\ref{sec:optimal}. The case $N_\eps\ll \frac1\eps$ (i.e., $\alpha=1$ in the regime \eqref{regime}) remains open; in that case, we conjecture that  our global estimate in Theorem \ref{thm:main} should still hold true, at least in smooth domains. 

\begin{rem}
Our proof adapts with minor modifications to critical points of the energy 
\begin{equation}
\label{enF}
E_\eps(u_\eps; \Omega):=\int_\Omega \frac 12\abs{\nabla u_\e}^2 + \frac{1}{4\e^2}F(\abs{u}^2)\: dx,
\end{equation}
where $F\in C^2([0,\infty))$ satisfies $F\geq 0$, $F(1)=0$ and  $(s-1)F'(s)\geq c(1-s)^2$ for all $s\geq 0$  and some constant $c>0$. The typical example is $F(s)=(1-s)^2$.
\end{rem}

\subsection{Related works}

There is a huge literature on the analysis of solutions $u_\eps$ of the Ginzburg-Landau system \eqref{eq:euler}. Let us only mention some of them (and apologize for omitting many other important ones). 

In the seminal paper \cite{BethuelBrezisHelein:1993a}, Bethuel, Brezis and H\'elein studied the system \eqref{eq:euler} on a smooth simply connected domain $\Om$ for minimizers $u_\eps$ of the associated energy functional, with a fixed smooth boundary data $g_\eps:=g$ such that $|g|=1$ on $\dOm$ and $g$ is of zero winding number (so $N_\eps, M_\eps$ are of order $1$); they proved that $\big|{|u_\eps|}-1\big|$ behaves as $\eps^2$ {\bf globally} in $\Om$ and this rate is optimal. They also studied the case of non-fixed smooth boundary data  $g_\eps:\dOm\to \R^2$ that is of zero winding number and has uniformly bounded energy $N_\eps\lesssim 1$; then for minimizers $u_\eps$, they deduced that $M_\eps\lesssim 1$ and $\big|{|u_\eps|}-1\big|$ behaves as $\eps^2$ {\bf locally} in  $\Om$.  These results also hold for non-minimizing solutions if $u_\e\to u_0$ 
strongly in $H^1$ for some limit $u_0$, see \cite[Remark A.1]{BethuelBrezisHelein:1994a}.

In \cite{BOS05}, Bethuel, Orlandi and Smets considered solutions of \eqref{eq:euler} that need not be minimizing, without imposing any bounds on $M_\e$ or $N_\e$. They proved local estimates on $\big|{|u_\eps|}-1\big|$, away from the boundary and from a vorticity set.
In our setting, their results imply that $\big|{|u_\eps|}-1\big|$ is of order at most $\e^{2(1-\beta)}M_\e$ in the region $\lbrace x\in\Omega\colon \dist(x,\partial\Omega)\geq\e^\beta\rbrace$, for any $\beta\in (0,1)$, but do not provide a good uniform estimate up to the boundary.

In the present work we focus on obtaining, for solutions of \eqref{eq:euler} that need not be minimizing, precise uniform estimates on $\big|{|u_\eps|}-1\big|$  which hold:
\begin{itemize}
\item up to the boundary $\partial\Om$ of a general Lipschitz domain, 
\item and in a regime that goes beyond the restrictive uniform bound $N_\e\lesssim 1$.
\end{itemize}

Estimates up to the boundary of a rectangle were obtained in \cite[Appendix]{Cote-Ignat-Miot}  in the regime $M_\e,N_\e\ll |\log\e|$. There it was proved that $\big|{|u_\eps|}-1\big|$ is of order at most $(\frac{1+N_\e+M_\e}{|\log\e|})^{\frac 16-}$ globally in $\Om$. In Section~\ref{s:apriori} we will follow the same approach in a general Lipschitz domain and under the less restrictive regime \eqref{regime}, as a first step towards the stronger estimate of Theorem~\ref{thm:main}.

\subsection{Motivation}

The energy functional $E_\e$ is a simplified version of a model describing superconducting materials. We simply mention here that  $\big|{|u_\eps|}-1\big|$ measures how close the system is to a superconducting state, and refer the interested reader to the monographs \cite{BethuelBrezisHelein:1994a,sandierserfaty}.

Another motivation comes from several studies of the pattern formation in thin ferromagnetic films \cite{IgnatOtto:2011a,Cote-Ignat-Miot,IK_bdv},
where one wishes to approximate
 $u_\eps$ by $\mathbb{S}^1$-valued maps away from the vortices. In a vortexless region $\Omega$ (assume e.g. $E_\eps(u_\eps; \Omega)\ll |\log \eps|$), the idea introduced in  \cite{IgnatOtto:2011a} consists in finding a (squared, spherical etc.) grid ${\cal R}_\eps$, each cell of the grid having the size $\sim \eps^\beta$ with $\beta\in (0,1)$ (i.e., much larger than the length-scale of a vortex) such that the energy $E_\eps(u_\eps; {\cal R}_\eps)$ on the $1$-dimensional grid ${\cal R}_\eps$ is of order $E_\eps(u_\eps; \Omega)/\eps^\beta$. Then Theorem \ref{thm:main} implies that $\big|{|u_\eps|}-1\big|$ behaves as a positive power of $\eps$ in $\Omega$, and $v_\eps=u_\e/|u_\e|$ is a \enquote{good} approximation of $u_\eps$ (in terms of the $L^2$ norm, their global  Jacobian etc., see \cite{IK_bdv}).
In that context, we give the following consequence of Theorem \ref{thm:main} for a cell of the grid leading to a key estimate needed in \cite{IK_bdv} (only a weaker version of this key estimate was needed 
in \cite{IgnatOtto:2011a,Cote-Ignat-Miot}):
\begin{cor}
Let ${\cal C}\subset \R^2$ be a Lipschitz bounded domain. Let $\eps>0$, $\beta\in (0,1)$ and 
${\cal C}_\eps:=\eps^\beta {\cal C}$ be a cell of size $\eps^\beta$. Assume that $u_\eps$ is a solution of \eqref{eq:euler} in ${\cal C}_\eps$ with
$$\int_{\partial {\cal C}_\eps} \frac12|\partial_\tau g_\eps|^2 +\frac1{4\eps^2} (1-|g_\eps|^2)^2\, d\h^1\ll \frac{|\log \eps|}{\eps^\beta}
$$
and 
$$\int_{{\cal C}_\eps} \frac12|\nabla u_\eps|^2 + \frac1{4\eps^2}(1-|u_\eps|^2)^2\, dx\ll |\log \eps|,$$
then $$\big|{|u_\eps|}-1\big|\leq C \eps^{\frac{1-\beta}6-} \quad \textrm{ in }\quad {\cal C}_\eps,$$ for some $C>0$ depending on  the Lipschitz regularity of ${\cal C}$. In particular, $g_\eps$ has zero winding number on ${\cal C}_\e$.
\end{cor}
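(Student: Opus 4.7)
The plan is to reduce the corollary to Theorem~\ref{thm:main} by a standard rescaling argument. Define $v(y) := u_\eps(\eps^\beta y)$ for $y\in\mathcal{C}$. Then, by the chain rule, $v$ solves
\[
-\Delta v = \frac{1}{\tilde\eps^{2}}\, v(1-|v|^2) \quad \textrm{in } \mathcal{C}, \qquad v = \tilde g_\eps := g_\eps(\eps^\beta \cdot) \textrm{ on } \partial\mathcal{C},
\]
with $\tilde\eps := \eps^{1-\beta}$, so $v$ is a Ginzburg-Landau solution on the fixed Lipschitz domain $\mathcal{C}$ at scale $\tilde\eps$. The goal is to check that the two energetic hypotheses of Theorem~\ref{thm:main} applied to $v$ at scale $\tilde\eps$ are implied by the assumed bounds on $M_\eps$ and the boundary energy of $g_\eps$ on $\partial\mathcal{C}_\eps$.

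A direct change of variable shows that the (scale-invariant) Dirichlet part and the potential part transform compatibly, giving
\[
\tilde M_{\tilde\eps} := \int_{\mathcal{C}} \tfrac12|\nabla v|^2 + \tfrac{1}{4\tilde\eps^2}(1-|v|^2)^2\, dy = M_\eps \ll |\log\eps| = \tfrac{1}{1-\beta}|\log\tilde\eps|,
\]
while for the boundary energies one gets
\[
\tilde N_{\tilde\eps} := \int_{\partial\mathcal{C}} \tfrac12|\partial_\tau \tilde g_\eps|^2 + \tfrac{1}{4\tilde\eps^2}(1-|\tilde g_\eps|^2)^2\, d\mathcal{H}^1 = \eps^\beta N_\eps \ll |\log\eps| = \tfrac{1}{1-\beta}|\log\tilde\eps|.
\]
In particular, for $\eps$ (hence $\tilde\eps$) small enough, $\tilde M_{\tilde\eps}\le \kappa |\log\tilde\eps|$ with $\kappa$ the constant from Theorem~\ref{thm:main} applied to the Lipschitz domain $\mathcal{C}$, and $\tilde N_{\tilde\eps}\ll \tilde\eps^{-\alpha}$ for any $\alpha\in(0,1)$ (since polylogarithms beat any positive power of $\tilde\eps^{-1}$). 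So the regime \eqref{regime} is satisfied by $v$ at scale $\tilde\eps$.

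Applying Theorem~\ref{thm:main} to $v$ then yields
\[
\sup_{\mathcal{C}_\eps}\bigl|\,|u_\eps|-1\bigr| = \sup_{\mathcal{C}}\bigl|\,|v|-1\bigr| \le C\Bigl(\tilde\eps^{1-}(1+\tilde N_{\tilde\eps}+\tilde M_{\tilde\eps})(1+\tilde M_{\tilde\eps})^{\tfrac12-}\Bigr)^{\tfrac16-},
\]
and plugging in $\tilde\eps=\eps^{1-\beta}$ together with $\tilde M_{\tilde\eps},\tilde N_{\tilde\eps}\ll|\log\eps|$ (which is subpolynomial in $\eps$ and can be absorbed into the $\eps^{(1-\beta)-}$ factor) gives the advertised bound $|\,|u_\eps|-1|\le C\eps^{(1-\beta)/6-}$. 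The vanishing of the winding number follows since $\tilde g_\eps/|\tilde g_\eps|$ and $g_\eps/|g_\eps|$ are the same map up to reparametrisation of $\partial\mathcal{C}$.

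There is no real obstacle: the only mild point to be careful with is verifying that the two pieces of the rescaled boundary energy combine exactly into $\tilde N_{\tilde\eps}$ with the correct prefactor $\eps^\beta$ (the tangential derivative gains $\eps^{-2\beta}$ while $d\mathcal{H}^1$ gains $\eps^\beta$, producing an overall $\eps^{-\beta}$ factor in front of $\tilde N_{\tilde\eps}$ when expressed in the original variables), and that the resulting $\tilde N_{\tilde\eps}$ is indeed dominated by a power $\tilde\eps^{-\alpha}$ for \emph{some} $\alpha\in(0,1)$, which is immediate from the logarithmic upper bound.
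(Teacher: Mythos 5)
Your proposal is correct and follows essentially the same route as the paper: rescale $u_\eps$ to $\tilde u_{\tilde\eps}(\tilde x)=u_\eps(\eps^\beta\tilde x)$ with $\tilde\eps=\eps^{1-\beta}$, verify that the rescaled interior and boundary energies are $\ll|\log\tilde\eps|$ (hence the regime \eqref{regime} holds for any $\alpha\in(0,1)$), and apply Theorem~\ref{thm:main}. Your scaling computations ($\tilde M_{\tilde\eps}=M_\eps$ and $\tilde N_{\tilde\eps}=\eps^\beta N_\eps$) and the absorption of the logarithmic factors into $\tilde\eps^{1-}$ simply make explicit the details the paper leaves implicit.
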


\begin{proof}{} 
Denoting the rescaled map $\tilde u_{\tilde \eps}(\tilde x):=u_\eps(\eps^\beta \tilde x)$ for $\tilde x\in {\cal C}$ with $\tilde \eps:=\eps^{1-\beta}$, then $\tilde u_{\tilde \eps}$ satisfies the system \eqref{eq:euler} with the parameter $\tilde \eps$ instead of $\eps$ and the boundary energy, resp. interior energy of $\tilde u_{\tilde \eps}$ on $\partial {\cal C}$, resp. in ${\cal C}$ are estimated by $N_{\tilde \eps}, M_{\tilde \eps}\ll |\log \tilde \eps|$. By Theorem \ref{thm:main}, the conclusion follows.
\qed
\end{proof}

\medskip

As already hinted at, the regime \eqref{regime} is motivated by the study of boundary vortices (see e.g. \cite{Kurzke:2006b,IK_bdv}). The typical example is given by the formation of a dipole of two boundary vortices (in the absence of interior vortices).

\begin{exa}
\label{ex1}
Let $\Omega\subset \R^2$ be a bounded Lipschitz domain containing the upper half unit ball, more precisely, 
$$\Omega\cap B(0,1)=\{x=(x_1, x_2)\in B(0,1)\, :\, x_2>0\},$$
where $B(0,1)$ is the unit ball centered at the origin. Let $\eta=\eta(\e)\in (0,1)$ be a parameter. Consider the boundary data $g_\eps:\dOm\to \mathbb{S}^1$ such that $g_\eps(x)=e^{i\phi_\eps}$ with
$$\phi_\e(x)=\begin{cases}
0\quad & \textrm{ if } \,  x\in \dOm\setminus B(0, \eta),\\
\pi (1-\frac{|x_1|}\eta)\quad & \textrm{ if } \,  x=(x_1, x_2)\in \dOm\cap B(0, \eta).
\end{cases}$$
(This is the prototype of a dipole of two boundary vortices corresponding of two consecutive transitions between opposite directions $\tau$ and $-\tau$ at the boundary at a distance $\eta$).
We extend $\phi_\e$ to the entire domain $\Om$ by setting $\phi_\e=0$ in $\Om\setminus B(0, \eta)$
and $\phi_\e(x)=\pi (1-\frac{|x|}\eta)$ if $x\in \Om\cap B(0, \eta)$. Then we compute that
$$N_\eps=\int_{\partial \Omega} \frac12|\partial_\tau g_\eps|^2\, d\h^1\lesssim \frac1\eta, \quad E_\e(e^{i\phi_\e}; \Om)\lesssim 1.$$
Therefore, if $u_\eps$ is a minimizer of $E_\eps(\cdot; \Om)$ under the Dirichlet boundary condition $u_\eps=g_\eps$ on $\dOm$, we have that $E_\e(u_\e; \Om)\leq E_\e(e^{i\phi_\e}; \Om)$ so that \eqref{regime} holds provided that $\frac 1\eta\ll \frac1{\e^\alpha}$. In this case, Theorem \ref{thm:main} implies that $|u_\e|$ remains close to $1$ as a positive power of $\e$, in particular, no interior vortices appear in $\Om$.
\end{exa}

\medskip

\subsection*{Notations}

In the sequel we will use the symbol $\lesssim$ to denote an inequality up to a multiplicative constant that depends only on 
the Lipschitz regularity of $\Om$, that is, on $(\rho_0,\theta_0)\in (0,\infty)\times (0,\pi)$  such that for all $x\in\partial\Omega$ there is a cone of vertex $x$,  radius
$\rho_0$ and opening angle $\theta_0$ which is included in $\overline\Omega$, and the opposite cone is included in $\R^2\setminus\Omega$ (this is the uniform cone property, see e.g. \cite[Theorem~1.2.2.2]{grisvard}). We also note that, thanks to the uniform cone property, the rectangle 
\begin{equation*}
R=(-\frac {\rho_0}2 \sin\frac{\theta_0}{2},\frac {\rho_0}2 \sin\frac{\theta_0}{2} ) \times (-\rho_0\cos\frac{\theta_0}{2},\rho_0\cos\frac{\theta_0}{2}),
\end{equation*}
 has the following property: for all $x\in\Omega$, there exists an angle $\gamma=\gamma(x)\in\R$ such that for all $t\in (0,1]$, the set
\begin{equation}\label{eq:Rt}
\mathcal R_t(x)=(x+t e^{i\gamma}  R)\cap\Omega\text{ is bi-Lipschitz homeomorphic to }tB,
\end{equation}
where $B$ is the unit ball, and the Lipschitz constants of the homeomorphism and its inverse are bounded by a constant depending only on $(\rho_0,\theta_0)$. See Figure~\ref{fig:cone} below.

\begin{figure}[h]
\caption{Cone property and rectangle $\mathcal R_1(x)$ at a boundary point $x\in\partial\Omega$}\label{fig:cone}
\begin{center}
\begin{tikzpicture}[scale=.4]

\draw[gray] (-3,-4) -- (3,4);
\draw[gray] (-3,4) -- (3,-4);
\draw[gray,thick,->] (.5,0) -- (3.5,3.6);
\draw[gray, thick] (3.2,2.3) node {$\rho_0$};
\draw[gray] (3,4) arc (53:127:5);
\draw[thick,gray,<->] (-3.2,4.5) arc (127:53:5.2);
\draw[thick, gray] (0,6.1) node {$\theta_0$};
\draw[gray] (-3,-4) arc (-127:-53:5);
\draw[thick] (1.5,4) -- (-1.5,4) -- (-1.5,-4) -- (1.5,-4) -- (1.5,4);
\draw[thick] (0,2.8) node { $\mathcal R_1(x)$};
\draw[very thick] (-3.5,1) -- (-2.3,.3) -- (-1.3,.8) -- (0,0) -- (1,1) -- (2.5,.5) -- (3.5,1.5);
\draw[very thick] (3.9,.9) node { $\partial\Omega$};
\draw (0,0) node {$\bullet$} node [below] {$x$};

\end{tikzpicture}
\end{center}
\end{figure}
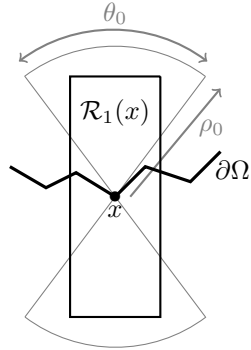

We recall that for $a\in\R$ we write $a+$ (resp. $a-$) to denote any real number strictly greater (resp. smaller) than $a$ but that can be chosen arbitrarily close to $a$. In inequalities involving such exponents, the constant will also depend on the distance of that number to $a$.

We write $B(x,r)$ for the ball centered at $x$ of radius $r$.

\section{A-priori global uniform estimate of $|u_\eps|$ in $\Om$}\label{s:apriori}
The aim of this section is to prove the following weaker estimate of $\big|{|u_\eps|}-1\big|$ in $\Om$:

\begin{thm}
\label{thm:apriori} 
Let $\Omega\subset \R^2$ be a Lipschitz bounded domain. 
If $u_\e$ satisfies \eqref{eq:euler} and \eqref{regime}, then we have
$$\sup_{\Omega}\big|{ |u_\eps|}-1\big|
  \lesssim \left( \frac{1+ M_\eps}{|\log \eps|} \right)^{\frac16-}. $$ 
 In particular, if  $\kappa$ is small enough in \eqref{regime} then $|u_\eps| \geq \frac12$ in $\Omega$  as $\eps\to 0$.
 \end{thm}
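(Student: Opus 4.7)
The plan is to follow the strategy of \cite{Cote-Ignat-Miot}, adapting it to a general Lipschitz domain by means of the family of rectangles $\mathcal{R}_t(x)$ provided by the uniform cone property in \eqref{eq:Rt}. I would start from two basic ingredients: (i) the classical Lipschitz estimate $\|\nabla u_\eps\|_{L^\infty(\Omega)}\lesssim 1/\eps$, obtained from \eqref{eq:euler} by standard elliptic regularity (valid up to $\partial\Omega$ once one harmonically extends the boundary data and uses the control given by $N_\eps$), and (ii) the direct $L^2$ bound $\|1-|u_\eps|^2\|_{L^2(\Omega)}^2 \le 4\eps^2 M_\eps$ read off from \eqref{int}.

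Given $x_0\in\overline\Omega$ with $|u_\eps(x_0)|\le 1-\delta$, ingredient (i) forces $|u_\eps|\le 1-\delta/2$ on $\mathcal{R}_{c\eps\delta}(x_0)$, which has area $\sim \eps^2\delta^2$. This already produces a potential-energy contribution of order $\delta^4$, hence the weak bound $\delta^4\lesssim M_\eps$. To gain the extra $1/|\log\eps|$ factor I would examine the rectangles $\mathcal{R}_{t_k}(x_0)$ at dyadic scales $\eps\delta\lesssim t_k=2^k t_0 \lesssim 1$ and apply a Pohozaev-type identity for the Ginzburg-Landau operator on each $\mathcal{R}_{t_k}$: at every scale either $|u_\eps|$ is still significantly away from $1$ on a set of positive relative measure, adding to the potential energy by an improved $L^2$ argument, or $|u_\eps|$ has recovered, in which case a local Bethuel-Brezis-H\'elein analysis of the lifted phase yields a Dirichlet-energy contribution scaling like $\delta^{c}$. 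Summing the $\sim |\log\eps|$ dyadic layers between the microscopic scale $\eps\delta$ and the macroscopic scale $O(1)$, and balancing the exponents, is what converts the pointwise excess $\delta$ into the expected $\delta^6\,|\log\eps|\lesssim 1+M_\eps$.

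The main obstacle is to carry this multi-scale argument \textbf{up to the boundary} of a merely Lipschitz domain, where neither the interior gradient bound nor the standard clearing-out lemma applies verbatim. This is precisely why the bi-Lipschitz homeomorphism $\mathcal{R}_t(x)\leftrightarrow tB$ from \eqref{eq:Rt} is indispensable: it provides charts with uniformly controlled distortion at every scale, so that both the Lipschitz estimate (i) and the Pohozaev/phase estimates transfer to a boundary point of $\partial\Omega$ with constants depending only on $(\rho_0,\theta_0)$. The boundary-integral term appearing in Pohozaev is then absorbed using the assumption $N_\eps\ll \eps^{-\alpha}$ with $\alpha<1$: at the macroscopic scales $t_k\gg \eps^\alpha$ this term is much smaller than the harvested bulk energy, and this is exactly the regime \eqref{regime} that prevents boundary vortices from spoiling the lower bounds. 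Putting these three ingredients together and taking the worst $x_0\in\overline\Omega$ yields the uniform estimate of Theorem~\ref{thm:apriori}.
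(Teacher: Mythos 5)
Your overall architecture (pointwise non-degeneracy of $1-|u_\eps|$ at $x_0$ forces a quantified amount of potential energy near $x_0$, to be compared with a localized energy bound carrying a $1/|\log\eps|$ gain) is the right one, but two of your ingredients fail as stated. First, the gradient bound $\|\nabla u_\eps\|_{L^\infty(\Omega)}\lesssim 1/\eps$ is \emph{not} available up to $\dOm$ under \eqref{regime}: the only control on $g_\eps$ is $\int_{\dOm}|\partial_\tau g_\eps|^2\lesssim N_\eps$, which by the embedding $H^1(\dOm)\subset C^{0,\frac12}(\dOm)$ gives H\"older, not Lipschitz, regularity of the trace; a boundary datum oscillating by $O(1)$ over a length $o(\eps)$ is compatible with $N_\eps\ll\eps^{-\alpha}$ and destroys any $O(1/\eps)$ Lipschitz bound near $\dOm$. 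The paper replaces your ingredient (i) by the weaker estimate $|u_\eps(x)-u_\eps(y)|\lesssim(|x-y|/\eps)^{\frac12-}$ (Lemma~\ref{lemma:maximum}, proved via $H^{\frac32-}$ elliptic estimates in the Lipschitz charts $\mathcal R_t$ and Sobolev embedding). This is exactly why the set on which $|u_\eps|\le 1-\delta/2$ has radius $\sim\eps\delta^{2+}$ rather than $\eps\delta$, why the microscale harvest is $\delta^{6+}$ rather than $\delta^4$, and why the final exponent is $\frac16-$. Your exponent bookkeeping is therefore off from the start, and the multi-scale correction you invoke to bridge $\delta^4$ to $\delta^6|\log\eps|$ is not substantiated: the dichotomy ``either the modulus is still far from $1$, or lift the phase and run a local Bethuel--Brezis--H\'elein analysis'' presupposes that $|u_\eps|$ is bounded away from $0$, which is precisely what the theorem is establishing, and the ``improved $L^2$ argument'' at each dyadic layer is not specified.

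Second, your absorption of the Pohozaev boundary term is backwards. At macroscopic scales you propose to neglect the boundary contribution against the ``harvested bulk energy'', but the bulk energy is at most $M_\eps\le\kappa|\log\eps|$ while $N_\eps$ may be as large as $o(\eps^{-\alpha})\gg|\log\eps|$, so the boundary term can dominate there. The paper instead works at \emph{small} radii: a pigeonhole over $r\in(\eps^{\alpha_1},\eps^{\alpha_2})$ with $1>\alpha_1>\alpha_2>\alpha$ (the measure of this range with respect to $dr/r$ is $\sim|\log\eps|$) produces a good radius $r_0$ with $\int_{\partial(B(x_0,r_0)\cap\Omega)}\den(u_\eps)\,d\h^1\lesssim(1+M_\eps)/(r_0|\log\eps|)$ --- this single pigeonhole, not a dyadic summation, is where the $1/|\log\eps|$ comes from --- and then one Pohozaev identity on $B(x_0,r_0)\cap\Omega$, in which the boundary integrand carries the factor $|x-x_0|\le r_0\ll\eps^\alpha$, gives $\eps^{-2}\int_{B(x_0,r_0)\cap\Omega}(1-|u_\eps|^2)^2\lesssim(1+M_\eps)/|\log\eps|$ (Lemma~\ref{lem:localized}). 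To run the pigeonhole one also needs the part of $\partial(B(x_0,r_0)\cap\Omega)$ lying on $\dOm$ under control, which requires the separate estimate $\int_{\dOm}|\partial_\nu u_\eps|^2\lesssim M_\eps+N_\eps$ (Lemma~\ref{lem_normal}, itself a Pohozaev identity with a vector field transversal to the Lipschitz boundary); this ingredient is absent from your outline.
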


\nd Theorem \ref{thm:apriori} is an improvement of \cite[Theorem 6 in Appendix]{Cote-Ignat-Miot}, 
where the boundary data satisfies the additional condition $|g_\eps|\leq 1$, $\Omega$ is a square and $N_\eps\ll |\log \eps|$. We will follow the strategy in \cite{Cote-Ignat-Miot}, generalizing  to Lipschitz domains and general boundary data $g_\eps:\partial \Omega \to \R^2$ with $N_\eps$ satisfying the  wider regime \eqref{regime}. The proof of Theorem \ref{thm:apriori} is divided 
into three parts:

\bigskip

\nd {\bf Part 1 of the proof of Theorem \ref{thm:apriori} }. We prove the following upper bound of $|u_\eps|$ in $\Omega$:
\be
\label{upper}
\|u_\eps\|_{L^\infty(\Omega)}-1\lesssim \sqrt{\eps N_\eps}. 
\ee
For that, we start by denoting $\zeta=(1-|g_\eps|)^2$ on $\dOm$. The Cauchy-Schwarz inequality yields:
\footnote{For the more general energy \eqref{enF}, only the estimate $F(s)\gtrsim (1-s)^2$ is needed, which is a consequence of $(s-1)F'(s)\gtrsim (1-s)^2$ and $F(1)=0$.}
\begin{align*}
\frac12|\partial_\tau g_\eps|^2 +\frac1{4\eps^2} (1-|g_\eps|^2)^2&\geq \frac1{8\eps^2}\zeta+\bigg(\frac1{8\eps^2}\zeta+\frac12\big|\partial_\tau|g_\eps|\big|^2\bigg)\geq \frac1{8\eps^2}\zeta+\frac1{4\eps}|\partial_\tau\zeta| \quad \textrm{on } \partial \Om.
\end{align*}
Using the embedding $W^{1,1}(\partial \Om)\subset L^\infty(\partial \Om)$, as $\h^1(\partial \Om)\geq \eps$, we deduce by \eqref{bdry}:
$$
N_\eps=\int_{\partial \Omega} \frac12|\partial_\tau g_\eps|^2 +\frac1{4\eps^2} (1-|g_\eps|^2)^2\, d\h^1\gtrsim \frac{1}{\eps} \|\zeta\|_{L^\infty(\dOm)}, \quad \textrm{as } \quad \eps\to 0,
 $$
so that 
\be
\label{delta}
\delta_\eps:=\big\|{ |g_\eps|}-1\big\|_{L^\infty(\dOm)}\lesssim \sqrt{\eps N_\eps}. 
\ee
Let $\tilde \rho_\eps=1-|u_\eps|^2$ in $\Om$. Then \eqref{eq:euler} implies that 
$$-\Delta \tilde \rho_\eps+\frac{2}{\eps^2}|u_\eps|^2\tilde \rho_\eps=2|\nabla u_\eps|^2\geq 0 \quad \textrm{ in }\quad \Omega$$ 
and $\tilde \rho_\eps=1-|g_\eps|^2\geq 1-(1+\delta_\eps)^2$ on $\partial \Omega$.
Thus, the maximum  principle\footnote{This argument adapts to critical points of the general  energy \eqref{enF}, provided $F'(s)\geq 0$ for $s\geq 1$, see e.g.  \cite[Lemma~8.3]{lamy14}.}
 implies that $\tilde \rho_\eps\geq 1-(1+\delta_\eps)^2$ in $\Om$, i.e., $|u_\eps|\leq 1+\delta_\eps$ in $\Omega$ yielding \eqref{upper} by \eqref{delta}.

\bigskip

\nd {\bf Part 2 of the proof of Theorem \ref{thm:apriori} }. We estimate a H\"older seminorm for $u_\eps$.

\begin{lem}\label{lemma:maximum} Let $\Omega\subset \R^2$ be a Lipschitz bounded domain. If $u_\eps$ satisfies \eqref{eq:euler} and \eqref{regime}, then \footnote{For the general 
energy \eqref{enF} this argument only uses the fact that $F$ is $C^1$ and the validity of a uniform upper bound $\norm{u_\e}_\infty\lesssim 1$, implied e.g. by \eqref{upper} which is valid as soon as $F'(s)\geq 0$ for $s\geq 1$.}
 \[
|u_\eps(x)-u_\eps(y)|\leq C \bigg(\frac{|x-y|}{\eps}\bigg)^{\frac12-} \qquad \forall x, y\in \Omega,
\] 
where $C>0$ depends only on the Lipschitz regularity of $\Om$.
\end{lem}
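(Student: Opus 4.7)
The plan is to rescale at the Ginzburg-Landau length-scale $\eps$ so as to convert \eqref{eq:euler} into an $\eps$-free semilinear elliptic problem, and then invoke classical boundary Hölder regularity. Since Part 1 already gives the uniform bound $\norm{u_\eps}_{L^\infty(\Omega)}\lesssim 1$, the estimate is immediate in the range $|x-y|\geq\eps$: there $(|x-y|/\eps)^{1/2-}\gtrsim 1$ and $|u_\eps(x)-u_\eps(y)|\leq 2\norm{u_\eps}_\infty\lesssim 1$. So it suffices to treat $|x-y|<\eps$.

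Given such $x,y$, pick $x_0\in\overline\Omega$ so that both $x$ and $y$ lie in an $O(\eps)$-neighborhood of $x_0$ (taking $x_0\in\partial\Omega$ close to $x$ if $\dist(x,\partial\Omega)<\eps$, and $x_0=x$ otherwise). Set $v(z):=u_\eps(x_0+\eps z)$ on $\tilde\Omega:=\eps^{-1}(\Omega-x_0)$. Then
\[
-\Delta v = v(1-|v|^2) \quad \textrm{in } \tilde\Omega, \qquad v=\tilde g \quad \textrm{on } \partial\tilde\Omega,
\]
with $\tilde g(z):=g_\eps(x_0+\eps z)$. The right-hand side is uniformly $L^\infty$-bounded thanks to Part 1, and the uniform cone property of $\Omega$ is scale-invariant, so $\tilde\Omega$ is uniformly Lipschitz and $\partial\tilde\Omega$ is locally the graph of a uniformly Lipschitz function.

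For the boundary data, a change of variables and \eqref{regime} give, for any fixed $R>0$,
\[
\int_{\partial\tilde\Omega\cap B(0,R)}|\partial_\tau\tilde g|^2\, d\h^1 \;=\; \eps\int_{\partial\Omega\cap B(x_0,R\eps)}|\partial_\tau g_\eps|^2\, d\h^1 \;\lesssim\; \eps N_\eps \;\ll\; \eps^{1-\alpha},
\]
so $\tilde g$ is bounded in $H^1$ on any fixed-scale portion of $\partial\tilde\Omega$. Composing with the Lipschitz graph parametrization and applying the one-dimensional embedding $H^1\hookrightarrow C^{0,1/2}$ yields a uniform bound on $[\tilde g]_{C^{0,1/2-}}$, together with $\norm{\tilde g}_\infty\lesssim 1$.

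It remains to apply classical boundary Hölder regularity: $v$ solves a linear elliptic equation $-\Delta v+c(z)v=0$ with $c=|v|^2-1\in L^\infty$ on the Lipschitz domain $\tilde\Omega$ with $C^{0,1/2-}$ Dirichlet data. Standard estimates (combining interior $W^{2,p}$ regularity for the bounded right-hand side with boundary Hölder continuity of harmonic functions in Lipschitz domains with Hölder data) produce
\[
|v(z_1)-v(z_2)|\lesssim |z_1-z_2|^{1/2-}
\]
uniformly on the unit-scale rescaled neighborhood. Unscaling through $z=(x-x_0)/\eps$ gives the claim. The main obstacle is to keep the constants uniform as $x_0$ ranges over $\overline\Omega$: this is secured by the uniform cone property through the scale-covariant rectangles $\mathcal R_t(x)$ of the Notations section. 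Matching the exponent $1/2-$ is then essentially forced by the regularity $H^1\hookrightarrow C^{1/2}$ of the boundary data, since the interior regularity of the semilinear equation is strictly better than $C^{0,1/2}$.
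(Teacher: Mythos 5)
Your rescaling skeleton is essentially the paper's: blow up at scale $\eps$, note that the Lipschitz character is scale-invariant and that the rescaled boundary data satisfies $\|\partial_\tau\tilde g\|_{L^2}^2\lesssim\eps N_\eps\ll 1$, localize to unit-scale bi-Lipschitz neighbourhoods $\mathcal R_t(x_0)$, and scale back; the reduction to $|x-y|<\eps$ and the boundary-energy computation are fine. The divergence, and the genuine gap, is in the regularity input. You invoke as a ``standard estimate'' that on a Lipschitz domain the solution of $-\Delta v=f\in L^\infty$ with $C^{0,\frac12-}$ Dirichlet data is $C^{0,\frac12-}$ up to the boundary. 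This is not a generic Schauder-type fact: on a merely Lipschitz planar domain the H\"older exponent attainable at the boundary is capped by the critical exponent $\beta_0=\pi/\theta_{\max}$, where $\theta_{\max}<2\pi$ is the largest interior opening permitted by the cone condition (equivalently, $\beta_0$ is the decay rate of harmonic measure, $\omega^z\bigl(\partial\Omega\setminus B(z_0,r)\bigr)\lesssim(|z-z_0|/r)^{\beta_0}$). For data of H\"older exponent $\beta\geq\beta_0$ the conclusion fails -- near a reentrant corner of angle $\tfrac{3\pi}{2}$ one gets at best $C^{0,2/3}$ however smooth the data -- and the naive two-scale splitting of the harmonic-measure integral only produces the exponent $\beta\beta_0/(\beta+\beta_0)\leq\tfrac14$; to recover the full exponent $\beta$ one must use $\beta<\beta_0$ so that $\int_{\partial\Omega}|w-z_0|^\beta\,d\omega^z(w)\lesssim|z-z_0|^\beta$. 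Your argument therefore silently requires $\beta_0>\tfrac12$, which is nowhere checked.

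That inequality does happen to hold in two dimensions: the uniform exterior cone of opening $\theta_0$ gives a barrier of the form $|z-z_0|^{\gamma}$ with $\gamma=\pi/(2\pi-\theta_0)>\tfrac12$, with constants depending only on $(\rho_0,\theta_0)$, so your route can be repaired by inserting this barrier/harmonic-measure estimate explicitly (plus a matching barrier for the $L^\infty$ right-hand side with zero data). But as written the key step is unjustified, and it is precisely the point where Lipschitz -- as opposed to $C^1$ -- regularity of $\partial\Omega$ bites; the same assertion is false in higher dimensions and for larger H\"older exponents. The paper avoids the issue by staying in the $L^2$ Sobolev scale: the Jerison--Kenig/Savar\'e estimate gives $\hat u\in H^{\frac32-}(\mathcal R)$ from $H^1$ boundary data on any Lipschitz domain ($H^{3/2}$ being the correct critical threshold there), and then the embedding $H^{\frac32-}\subset C^{0,\frac12-}$ in two dimensions yields the claim. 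Either adopt that route, or state and prove the barrier estimate with exponent $\pi/(2\pi-\theta_0)$.
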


\begin{rem}
In general, we don't have that $\|\nabla u_\eps\|_{L^\infty(\Omega)}\leq \frac{C}\eps$ because this estimate can be violated by the boundary condition $g_\eps$ on $\dOm$. But since $g_\eps$ belongs to $H^1(\dOm)$ that embeds into the H\"older space $C^{0, \frac12}(\dOm)$, we 
can
deduce an appropriate estimate of a H\"older seminorm for $u_\eps$ in $\Omega$.
\end{rem}

\begin{proof}{ of Lemma \ref{lemma:maximum}} 
Consider the rescaled map $\hat u(\hat x)=u_\e(\e \hat x)$ defined for $\hat x\in\Omega_\e=\e^{-1}\Omega$.  (The map $\hat u$ depends on $\eps$, we omit this dependence to simplify notation.) This map solves
\begin{equation*}
\left\lbrace
\begin{aligned}
-\Delta \hat u & =(1-\abs{\hat u}^2)\hat u &\text{in }\Omega_\e,\\
\hat u & =\hat g &\text{on }\partial\Omega_\e,
\end{aligned}
\right.
\end{equation*}
where $\hat g(\hat x)=g_\e(\e\hat x)$ for $\hat x\in\partial\Omega_\e$.
We fix $x_0\in\Omega_\e$ and consider the Lipschitz domain 
\begin{equation*}
\mathcal R =\mathcal R(x_0) = \frac 1\e \left( (\e x_0 +\e e^{i\gamma(\e x_0)} R)\cap\Omega\right),
\end{equation*}
which is bi-Lipschitz homeomorphic to the unit ball $B$, with Lipschitz bounds uniform in $\e$ and $x_0$ and depending 
only on the Lipschitz regularity of $\Omega$,  thanks to the definition of $R$, see \eqref{eq:Rt}.
 Since $|\hat g|\leq 1+\delta_\e\leq 2$ on $\partial \Om_\eps$ (by \eqref{delta}) and $\abs{\hat u}\leq 1+\delta_\e\leq 2$ in $\Om_\eps$ (by \eqref{upper}) as $\eps\to 0$, elliptic estimates in Lipschitz domains (see e.g. \cite{JK81,savare98}, and \cite[Section VI]{jonssonwallin} for the theory of traces) yield
\begin{equation*}
\norm{\hat u}_{H^{\frac 32 -}(\mathcal R)}\lesssim 1+\norm{\hat g}_{\dot{H}^{1}(\partial\Omega_\e)}.
\end{equation*}
 The constant depends only on the Lipschitz regularity of the domain $\mathcal R$ (see e.g. the proof of Theorem~2 in \cite{savare98}), and is therefore bounded independently of $x_0\in\Omega_\e$ and $\e\in (0,1]$.
 By Sobolev embedding we deduce that
\begin{equation*}
\|\hat u\|_{C^{0,\frac 12 -}(\mathcal R)}\lesssim 1 + \norm{\hat g}_{\dot{H}^1(\partial\Omega_\e)} \lesssim 1+(\e N_\e)^{\frac12}.
\end{equation*}
The constant in the Sobolev imbedding depends only on the Lipschitz regularity of $\Omega$, since the imbedding inequalities $\norm{v}_{L^{4-}(B)}\lesssim \norm{v}_{H^{\frac 12 -}(B)}$ 
and $\|v\|_{C^{0,\frac 12-}(B)}\lesssim \norm{v}_{W^{1,4-}(B)}$ are valid on the unit ball $B\subset \R^2$ and behave well under composition by a bi-Lipschitz homeomorphism.
Since any two points $x,y\in\Omega_\e$ which are close enough are contained in a domain $\mathcal R(x_0)$ for some $x_0\in\Omega_\e$,  recalling once more that $\abs{\hat u}\leq 2$ in $\Om_\eps$ (by \eqref{upper}) we infer 
\begin{equation*}
\norm{\hat u}_{C^{0,\frac 12 -}(\Omega_\e)}\lesssim 1+(\e N_\e)^\frac12 \lesssim 1 \quad \textrm{as} \quad \eps\to 0.
\end{equation*}
The last inequality is due to our assumption \eqref{regime}.
The conclusion follows by scaling back to $u_\e(x)=\hat u(\e^{-1}x)$.
\qed
\end{proof}

\bigskip

\nd {\bf Part 3 of the proof of Theorem \ref{thm:apriori}}. 
We start by estimating the normal derivative of $u_\eps$ at the boundary $\dOm$: 
 
\begin{lem}
\label{lem_normal}
Let $\Omega\subset \R^2$ be a Lipschitz bounded domain. 
If $u_\eps$ satisfies \eqref{eq:euler}, then we have \footnote{In the context of the general 
energy \eqref{enF}, we need only the assumption that $F\in C^1$.}
$$\int_{\partial \Om} |\partial_\nu u_\eps|^2\, d\h^1 \lesssim M_\eps+N_\eps.$$
\end{lem}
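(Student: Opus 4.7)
I would prove the lemma via a Pohozaev/Rellich-type identity. The first step is to construct a bounded Lipschitz vector field $X\colon\R^2\to\R^2$ satisfying $|X|+|\nabla X|\lesssim 1$ and $X\cdot\nu\ge c>0$ a.e.\ on $\dOm$, with the constant $c$ depending only on the Lipschitz regularity of $\Omega$. Such an $X$ exists as a consequence of the uniform cone property: at each $x_0\in\dOm$, the constant vector opposite to the interior cone axis has inner product with $\nu$ bounded below by $\sin(\theta_0/2)$ throughout a neighbourhood of $x_0$, and by compactness of $\dOm$ one extracts a finite cover and glues these local constant vectors together with a smooth partition of unity into a global $X$.

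Next I test \eqref{eq:euler} against the vector-valued multiplier $(X\cdot\nabla)u_\eps$ and integrate by parts twice (first in $-\Delta u_\eps$, then in the resulting term $X_j\partial_j(\tfrac12|\nabla u_\eps|^2)$). This yields the Rellich identity
\begin{equation*}
\int_{\dOm}\Bigl[\tfrac12|\nabla u_\eps|^2(X\cdot\nu)-(\partial_\nu u_\eps)\cdot(X\cdot\nabla)u_\eps\Bigr]\,d\h^1=\int_\Omega\Bigl[\tfrac12|\nabla u_\eps|^2\nabla\cdot X-\mathcal{Q}(\nabla u_\eps,\nabla X)\Bigr]dx+\int_\Omega\tfrac1{\eps^2}u_\eps(1-|u_\eps|^2)\cdot(X\cdot\nabla)u_\eps\,dx,
\end{equation*}
where $\mathcal{Q}(\nabla u_\eps,\nabla X)=\sum_{i,j,k}(\partial_k u_{\eps,i})(\partial_k X_j)(\partial_j u_{\eps,i})$ is a quadratic form in $\nabla u_\eps$ with coefficients controlled by $|\nabla X|\lesssim 1$. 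Using $u_\eps\cdot(X\cdot\nabla)u_\eps=-\tfrac12(X\cdot\nabla)(1-|u_\eps|^2)$ and one more integration by parts, the nonlinear bulk term rewrites as $\tfrac{1}{4\eps^2}\int_\Omega(\nabla\cdot X)(1-|u_\eps|^2)^2\,dx-\tfrac{1}{4\eps^2}\int_{\dOm}(X\cdot\nu)(1-|g_\eps|^2)^2\,d\h^1$. On $\dOm$, the Dirichlet condition gives $\nabla u_\eps=(\partial_\nu u_\eps)\otimes\nu+(\partial_\tau g_\eps)\otimes\tau$, so the boundary integrand on the left collapses to
$$-\tfrac12(X\cdot\nu)|\partial_\nu u_\eps|^2+\tfrac12(X\cdot\nu)|\partial_\tau g_\eps|^2-(X\cdot\tau)(\partial_\nu u_\eps)\cdot(\partial_\tau g_\eps),$$
isolating the sought normal-derivative term.

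Thanks to $|X|+|\nabla X|\lesssim 1$, the two bulk integrals are $\lesssim M_\eps$, while the $|\partial_\tau g_\eps|^2$ and $(1-|g_\eps|^2)^2/\eps^2$ boundary integrals are $\lesssim N_\eps$. The cross term is handled by Young's inequality,
$$\bigl|(X\cdot\tau)(\partial_\nu u_\eps)\cdot(\partial_\tau g_\eps)\bigr|\le \tfrac{c}{4}|\partial_\nu u_\eps|^2+C|\partial_\tau g_\eps|^2,$$
and its first piece is absorbed into $\tfrac12(X\cdot\nu)|\partial_\nu u_\eps|^2\ge \tfrac{c}{2}|\partial_\nu u_\eps|^2$ on the left. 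This leaves $\int_{\dOm}|\partial_\nu u_\eps|^2\,d\h^1\lesssim M_\eps+N_\eps$, as claimed.

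The main obstacle is justifying the identity on a merely Lipschitz domain: $u_\eps$ is not a priori $H^2$ up to $\dOm$, and even the meaning of $\partial_\nu u_\eps\in L^2(\dOm)$ has to be secured. I would handle this by approximating $\Omega$ from inside by a sequence of smooth subdomains compatible with the uniform cone property, applying the Rellich identity on each (where classical $H^2$ regularity holds), and passing to the limit with the help of Jerison--Kenig/Savar\'e elliptic theory in Lipschitz domains, which yields both $\partial_\nu u_\eps\in L^2(\dOm)$ and the convergence of every term.
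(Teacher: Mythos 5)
Your proposal is correct and follows essentially the same route as the paper: a Pohozaev/Rellich identity with a transversal vector field ($V\cdot\nu\geq a>0$, whose existence the paper takes from Grisvard rather than constructing via a partition of unity), the boundary decomposition $\nabla u_\eps=\nu\otimes\partial_\nu u_\eps+\tau\otimes\partial_\tau g_\eps$, and absorption of the cross term by Young's inequality. Your remarks on justifying the identity on a merely Lipschitz domain are a sensible extra precaution that the paper leaves implicit.
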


\proof{ of Lemma \ref{lem_normal}} We use the Pohozaev identity for $u_\eps$ in the spirit of \cite[Proposition 3]{BethuelBrezisHelein:1993a}, the only difference is to adapt that result to the setting of Lipschitz domains $\Om$.  More precisely, we consider a map $V:\Omega\to \R^2$ that is $C^1$ in the closed domain $\bar \Omega$ and such that 
$V\cdot \nu\geq a>0$ on $\partial \Omega$ for some $a>0$ depending only on the Lipschitz regularity of $\Omega$
(see e.g. \cite[Lemma~1.5.1.9]{grisvard}).
Multiplying the equation \eqref{eq:euler} by $(V(x)\cdot \nabla) u_\eps$ and integrating by parts,
as $V\in C^1(\bar \Om)$, we deduce by \eqref{bdry} and \eqref{int}:
\begin{align}
\big|\frac{1}{\eps^2}\int_{ {\Omega}} u_\eps(1-|u_\eps|^2) \cdot (V(x)\cdot \nabla) u_\eps\, dx \big|\nonumber&=\bigg|\frac{1}{4\eps^2}\int_{ {\Omega}} \di V(1-|u_\eps|^2)^2 \, dx \\
\label{unu}&\quad \quad -\frac{1}{4\eps^2}\int_{\partial {\Omega}} V(x)\cdot \nu (1-|g_\eps|^2)^2 \, d\h^1\bigg| 
\lesssim M_\eps+N_\eps,\\
 \int_{ {\Omega}} \Delta u_\eps \cdot (V(x)\cdot \nabla) u_\eps\, dx 
\label{doi}&=\int_{ \partial {\Omega}} \bigg( (\nu \cdot \nabla)u_\eps \cdot (V\cdot \nabla) u_\eps 
-\frac12 V\cdot \nu |\nabla u_\eps|^2 \bigg)\, d\h^1\\
\nonumber&\quad \quad +\int_\Om \bigg( \frac12 \di V |\nabla u_\eps|^2-\sum_{j=1,2} \partial_j u_\eps \cdot (\partial_j V\cdot \nabla)u_\eps\bigg)\, dx.
\end{align}
 For $x\in \partial {\Omega}$, we decompose
$V=s(x)\nu+t(x)\tau$ where $s,t\in L^\infty(\dOm)$, $s(x)=V\cdot \nu \geq a>0$ for a.e. $x\in \dOm$, and
$\nabla u_\eps=\nu\otimes \partial_\nu u_\eps+\tau \otimes \partial_\tau g_\eps$ on $\partial {\Omega}$. By \eqref{eq:euler}, \eqref{bdry}, \eqref{int}, \eqref{unu} and \eqref{doi}, as $V\in C^1(\bar \Om)$, we conclude  by Young's inequality:
$$M_\eps+N_\eps \gtrsim \int_{ \partial {\Omega}} \bigg(\frac{s(x)}2 \big|\partial_\nu u_\eps|^2
-\frac{s(x)}2 \big|\partial_\tau g_\eps\big|^2+t(x) \partial_\nu u_\eps \cdot \partial_\tau g_\eps \bigg)\, d\h^1 
\gtrsim \int_{ \partial {\Omega}} \big|\partial_\nu u_\eps|^2\, d\h^1-N_\eps.$$
\qed

\bigskip

We use Lemma \ref{lem_normal} to prove the following estimate of the potential energy in small balls (of radius $\ll \eps^\alpha$). To simplify notation,  we denote the energy density by
$$ \den(u_\eps):=\frac12|\nabla u_\eps|^2 + \frac1{4\eps^2}(1-|u_\eps|^2)^2, \quad u_\eps:\Omega\to \R^2.$$
(In the context of the energy \eqref{enF}, only the assumption $F\in C^1$ is needed for the following estimate).

\begin{lem}
\label{lem:localized}
Let $\Omega\subset \R^2$ be a Lipschitz bounded domain and $u_\eps$ be a solution of \eqref{eq:euler} in the regime \eqref{regime}. 
Fix $1>\bun>\bd>\alpha>0$. There exists $C\geq 1$ such that for every $x_0\in \Omega$, we can find $r_0=r_0(x_0)\in (\eps^{\bun}, \eps^{\bd})$
such that
\be
\label{ineq1}
\int_{\partial \big(B(x_0,r_0)\cap \Omega\big)} \den(u_\eps)\, d\h^1\leq \frac{C(1+ M_\eps)}{r_0 |\log \eps|}\ee
for every $\eps\leq \eps_0$ with $\eps_0=\eps_0(\bd, \alpha)>0$. 
Moreover, we have that
\be \label{ineq2}
\frac{1}{\eps^2} \int_{B(x_0,r_0)\cap \Omega} (1-|u_\eps|^2)^2\, dx \leq \frac{\tilde{C}(1+ M_\eps)}{|\log \eps|}\ee
for some $\tilde{C}\geq1$.
\end{lem}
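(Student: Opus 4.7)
To prove \eqref{ineq1} the idea is a Fubini/pigeonhole argument on the radius, run with respect to the logarithmic measure $dr/r$ on the interval $(\eps^{\bun},\eps^{\bd})$. Decompose $\partial(B(x_0,r)\cap\Omega) = \Gamma_r \cup \gamma_r$ with the ``interior arc'' $\Gamma_r = \partial B(x_0,r)\cap\Omega$ and the ``boundary arc'' $\gamma_r = \partial\Omega\cap B(x_0,r)$. By the coarea formula applied to $x\mapsto |x-x_0|$,
\begin{equation*}
\int_{\eps^{\bun}}^{\eps^{\bd}} r \biggl(\int_{\Gamma_r} \den(u_\eps)\,d\h^1\biggr) \frac{dr}{r} = \int_{\eps^{\bun}}^{\eps^{\bd}}\!\!\int_{\Gamma_r} \den(u_\eps)\,d\h^1\,dr \leq M_\eps,
\end{equation*}
and since the total $dr/r$-mass of this interval equals $(\bun-\bd)|\log\eps|$, Chebyshev's inequality singles out a set $A\subset (\eps^{\bun},\eps^{\bd})$ of positive $dr/r$-measure on which $r\int_{\Gamma_r}\den(u_\eps)\,d\h^1 \lesssim M_\eps/|\log\eps|$.

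The contribution of $\gamma_r$ requires no further selection of $r$. Using $u_\eps=g_\eps$ on $\partial\Omega$ together with Lemma \ref{lem_normal} and \eqref{bdry} one has $\int_{\partial\Omega}\den(u_\eps)\,d\h^1 \lesssim M_\eps+N_\eps$, hence for every $r\in (\eps^{\bun},\eps^{\bd})$,
\begin{equation*}
r\int_{\gamma_r}\den(u_\eps)\,d\h^1 \lesssim \eps^{\bd}(M_\eps+N_\eps) \lesssim \eps^{\bd}|\log\eps| + \eps^{\bd-\alpha}\,(\eps^\alpha N_\eps),
\end{equation*}
which is $o(1/|\log\eps|)$ as $\eps\to 0$, since $\bd>\alpha$ and $\eps^\alpha N_\eps \to 0$. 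Picking any $r_0\in A$ and combining the two bounds yields \eqref{ineq1}.

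To derive \eqref{ineq2} I would apply a Pohozaev identity on the Lipschitz domain $B(x_0,r_0)\cap\Omega$ (Lipschitz for almost every $r_0$) with the radial vector field $V(x)=x-x_0$, repeating the integration-by-parts computation in the proof of Lemma \ref{lem_normal}. Because $\di V=2$ and $\sum_j \partial_j u_\eps\cdot(\partial_j V\cdot\nabla)u_\eps = |\nabla u_\eps|^2$, the interior bulk terms cancel exactly and one obtains
\begin{equation*}
\frac{1}{2\eps^2}\int_{B(x_0,r_0)\cap\Omega}(1-|u_\eps|^2)^2\,dx = \int_{\partial(B(x_0,r_0)\cap\Omega)}\biggl[\tfrac{1}{2}(V\!\cdot\!\nu)|\nabla u_\eps|^2 - (\nu\!\cdot\!\nabla u_\eps)(V\!\cdot\!\nabla u_\eps) + \frac{V\!\cdot\!\nu}{4\eps^2}(1-|u_\eps|^2)^2\biggr]d\h^1.
\end{equation*}
Since $|V|\leq r_0$ on this boundary, each integrand is pointwise dominated by $\lesssim r_0\,\den(u_\eps)$, and inserting \eqref{ineq1} yields \eqref{ineq2}.

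\textbf{The main obstacle} is the treatment of the $\gamma_r$ contribution in \eqref{ineq1}: unlike the interior energy, the boundary energy $\int_{\partial\Omega}\den(u_\eps)$ cannot be localized to an arc of length $r$ by a coarea/pigeonhole argument, and the naive bound through $N_\eps$ alone is only $\lesssim\eps^{-\alpha}$, which is far too large. The structural (rather than analytic) point that rescues us is that the constraint $r_0\leq \eps^{\bd}$ with $\bd>\alpha$ forces $r_0 N_\eps\to 0$ for \emph{every} admissible $r_0$, so the pigeonhole on $r$ is only needed for the interior arc.
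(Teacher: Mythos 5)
Your proposal is correct and takes essentially the same approach as the paper: your $dr/r$ pigeonhole on the interior arc is exactly the contrapositive of the paper's contradiction argument (which integrates the assumed lower bound in $r$ over $(\eps^{\bun},\eps^{\bd})$), and the $\partial\Omega$ contribution is handled identically, uniformly in $r$ via Lemma \ref{lem_normal} and the fact that $\bd>\alpha$ makes $\eps^{\bd}N_\eps$ negligible. The derivation of \eqref{ineq2} by the Pohozaev identity with $V(x)=x-x_0$ on $B(x_0,r_0)\cap\Omega$, bounding all boundary terms by $r_0\,\den(u_\eps)$ and invoking \eqref{ineq1}, is also the paper's argument.
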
    

\begin{proof}{ of Lemma \ref{lem:localized}} We distinguish two steps:

\medskip

\nd {\bf Step 1}. {\it Proof of \eqref{ineq1}.} Assume by contradiction that for every $C\geq 1$ there exists $x\in \Omega$ such that
for every $r\in (\eps^{\bun}, \eps^{\bd})$
we have $$\int_{\partial \big(B(x, r)\cap \Omega\big)} \den(u_\eps)\, d\h^1\geq \frac{C (1+M_\eps)}{r |\log \eps|}.$$
Since $N_\eps \eps^\alpha\ll 1$, by \eqref{bdry} and Lemma \ref{lem_normal}, there exists $c_1>0$ such that $$\int_{\partial \Omega} \den(u_\eps)\, d\h^1\leq c_1( M_\eps+N_\eps)\leq  \frac{1+ M_\eps}{2 \eps^{\bd} |\log \eps|}\leq \frac{C(1+M_\eps)}{2r |\log \eps|}, \quad \forall r\in (\eps^{\bun}, \eps^{\bd})$$
for every $\eps\leq \eps_0$ (with $\eps_0>0$ depending on $\bd$ and $\alpha$).
Therefore, we deduce that
$$\int_{\partial B(x, r)\cap \Omega} \den(u_\eps)\, d\h^1\geq \frac{C(1+ M_\eps)}{2r |\log \eps|}.$$
Integrating in $r\in (\eps^{\bun}, \eps^{\bd})$, we obtain by \eqref{int}:
$$M_\eps=\int_{\Omega} \den(u_\eps)\, dx\geq \int_{B(x, \eps^{\bd})\cap \Omega} \den(u_\eps)\, dx\geq \int_{\eps^{\bun}}^{\eps^{\bd}} 
dr \int_{\partial B(x, r)\cap \Omega} \den(u_\eps)\, d\h^1\geq \frac{C(\bun-\bd)(1+ M_\eps)}{2}$$
which is a contradiction with the fact that $C$ can be arbitrary large.

\bigskip

\nd {\bf Step 2.} {\it Proof of \eqref{ineq2}.} 
Let $\nu$ be the outer unit normal vector at  the boundary of the domain$$ \cd:=B(x_0,r_0)\cap \Omega.$$
As in the proof of Lemma \ref{lem_normal}, we use the Pohozaev identity for the solution $u_\eps$ of
\eqref{eq:euler}. Indeed, multiplying the equation by $(x-x_0)\cdot \nabla u_\eps$ and integrating by parts, 
we deduce:
\begin{align*}
\int_{ \cd} -\Delta u_\eps \cdot \bigg((x-x_0)\cdot \nabla u_\eps\bigg)\, dx&=\int_{ \partial \cd} \bigg( \frac 1 2 (x-x_0)\cdot \nu |\nabla u_\eps|^2
-\partial_\nu u_\eps\cdot \big((x-x_0)\cdot\nabla\big) u_\eps
\bigg)\, d\h^1,\\
\frac{1}{\eps^2}\int_{ \cd} u_\eps(1-|u_\eps|^2) \cdot \bigg((x-x_0)\cdot \nabla u_\eps\bigg)\, dx&=\frac{1}{2\eps^2}\int_{ \cd} (1-|u_\eps|^2)^2 \, dx\\&\quad \quad -\frac{1}{4\eps^2}\int_{\partial \cd} (x-x_0)\cdot \nu (1-|u_\eps|^2)^2 \, d\h^1.
\end{align*}
Since $|x-x_0|\leq r_0$ on $\partial \cd$, by \eqref{ineq1},
we deduce that \eqref{ineq2} holds true.
\qed
\end{proof}

\bigskip

The conclusion of Theorem \ref{thm:apriori} comes from the following result:

\begin{lem}\label{lemma:lower-bound} 
Let $\Omega\subset \R^2$ be a Lipschitz bounded domain. If $u_\eps$ satisfies \eqref{eq:euler} and \eqref{regime}, then we have
\footnote{For the general energy \eqref{enF} we only need here $(s-1)^2\lesssim F(s)$.}
$$\||u_\eps|^2-1\|_{L^\infty(\Omega)}\lesssim \left( \frac{1+ M_\eps}{|\log \eps|} \right)^{\frac16-}.$$
\end{lem}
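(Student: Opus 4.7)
The plan is to combine the Hölder continuity of $u_\e$ from Lemma~\ref{lemma:maximum} with the localized potential-energy bound from Lemma~\ref{lem:localized}. Set $\eta:=\bigl\||u_\e|^2-1\bigr\|_{L^\infty(\Omega)}$. Since $u_\e$ is continuous on $\overline\Omega$ and $\Omega$ is open, we can pick $x_0\in\Omega$ with $\bigl||u_\e(x_0)|^2-1\bigr|\geq \eta/2$. The preliminary upper bound \eqref{upper} together with \eqref{regime} gives $|u_\e|\lesssim 1$ uniformly, hence $\bigl||u_\e(x)|^2-|u_\e(x_0)|^2\bigr|\lesssim |u_\e(x)-u_\e(x_0)|$. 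Fixing an exponent $\beta<\tfrac12$ (to be chosen close to $\tfrac12$), Lemma~\ref{lemma:maximum} then yields a constant $c=c(\beta,\Omega)>0$ such that
\begin{equation}\label{eq:prop-lower}
\bigl||u_\e(x)|^2-1\bigr|\geq \frac{\eta}{4}\qquad\text{for all }x\in B(x_0,r)\cap\Omega,\quad\text{provided }r\leq c\,\e\,\eta^{1/\beta}.
\end{equation}

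Next, fix parameters $1>\bun>\bd>\alpha$ and let $r_0\in(\e^{\bun},\e^{\bd})$ be the radius supplied by Lemma~\ref{lem:localized} at $x_0$. Since $\eta$ is a priori uniformly bounded (e.g.\ by $2$) and $\bun<1$, for $\e$ small enough the choice $r=c\,\e\,\eta^{1/\beta}$ satisfies $r\leq \e^{\bun}\leq r_0$, so \eqref{ineq2} applies on $B(x_0,r)$. Using also the uniform lower bound $|B(x_0,r)\cap\Omega|\gtrsim r^2$ from the cone property of $\Omega$, we conclude
\begin{equation*}
\frac{\eta^2\,r^2}{\e^2}\;\lesssim\; \frac{1}{\e^2}\int_{B(x_0,r)\cap\Omega}(1-|u_\e|^2)^2\,dx\;\lesssim\; \frac{1+M_\e}{|\log\e|}.
\end{equation*}

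Substituting $r\simeq\e\,\eta^{1/\beta}$ in the above yields $\eta^{2+2/\beta}\lesssim (1+M_\e)/|\log\e|$, i.e.
\begin{equation*}
\eta\;\lesssim\;\left(\frac{1+M_\e}{|\log\e|}\right)^{\beta/(2+2\beta)}.
\end{equation*}
As $\beta\nearrow \tfrac12$, the exponent $\beta/(2+2\beta)$ approaches $\tfrac16$ from below, producing the claimed bound. The delicate point is the matching of scales: the Hölder-scale threshold $c\,\e\,\eta^{1/\beta}$ on $r$ must stay below the Pohozaev-scale $r_0\geq \e^{\bun}$ at which the potential energy is controlled, which is ensured by $\bun<1$ together with the a priori bound on $\eta$. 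All implicit constants depend only on the Lipschitz regularity of $\Omega$ by the uniformity already built into Lemmas~\ref{lemma:maximum} and~\ref{lem:localized}.
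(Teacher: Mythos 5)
Your proposal is correct and follows essentially the same route as the paper: both use the H\"older estimate of Lemma~\ref{lemma:maximum} to show that $|1-|u_\e|^2|$ stays comparable to its value at $x_0$ on a ball of radius $\sim \e\,\eta^{2+}$, then integrate the potential over that ball and compare with the localized bound \eqref{ineq2} of Lemma~\ref{lem:localized}, yielding the exponent $\tfrac16-$. The only (cosmetic) difference is that you select $x_0$ as a near-maximizer and solve for $\eta$ at the end, whereas the paper derives the pointwise bound $(1-|u_\e(x_0)|^2)^{6+}\lesssim (1+M_\e)/|\log\e|$ directly at an arbitrary $x_0$.
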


\begin{proof}{}
Let $x_0\in \Omega$ 
and set $1>A\geq 0$ such that
$$4C\: A^{\frac12-}=\frac{\big|1-|u_\eps(x_0)|^2\big|}{2},$$ 
where $C\geq 1$ is given by Lemma \ref{lemma:maximum}.  By Lemma \ref{lemma:maximum}, we obtain for any $y\in B(x_0,A\eps)\cap \Omega$
$$\big|1-|u_\eps(y)|^2\big|\geq \big|1-|u_\eps(x_0)|^2\big|-4C\: A^{\frac12-}=\frac{\big|1-|u_\eps(x_0)|^2\big|}{2}$$
as $|u_\eps(y)|+|u_\eps(x_0)|\leq 4$. Hence, for small $\eps$,
\begin{equation}\label{ineq:reuse}
\begin{aligned}\int_{B(x_0, A\eps)\cap \Omega}(1-|u_\eps(y)|^2)^2\,dy & \geq C(\Omega)A^2\eps^2 (1-|u_\eps(x_0)|^2)^2\\
&\geq \tilde C(\Omega) \eps^2 (1-|u_\eps(x_0)|^2)^{6+},
\end{aligned}
\end{equation}
where $C(\Omega),\tilde C(\Omega)>0$.
We have that $B(x_0, A\eps)\subset B(x_0, r_0)$ for $\eps\leq \eps_0$ with $\eps_0$ depending only on $\alpha_1$ in Lemma \ref{lem:localized}. Thus, by  \eqref{ineq2}, we obtain
$$
(1-|u_\eps(x_0)|^2)^{6+}\leq  \hat{C}\frac{1+ M_\eps}{|\log \eps|} $$
and the conclusion follows.
\qed
\end{proof}

\section{Proof of Theorem \ref{thm:main}}\label{s:proof}

The main idea is to improve the convergence of $|u_\eps|$ to $1$ locally in $L^2$-norm; this consists in improving the local estimate of the potential energy \eqref{ineq2} to a positive power of $\eps$ and then the argument in Lemma \ref{lemma:lower-bound} yields the conclusion (i.e., the desired estimate in $L^\infty$-norm of  $|u_\eps|-1$ in our main result).

Let $x_0\in\Om$ and $\e>0$.  By Fubini's theorem we may choose $t\in [1/2,1]$ such that the domain 
\be
\label{dr}
\mathcal R =\mathcal R_t(x_0)
\ee
defined in \eqref{eq:Rt} satisfies
\begin{equation}\label{choi}
\int_{\partial \mathcal R\cap\Om} \frac12|\nabla u_\eps|^2 +\frac1{4\eps^2} (1-|u_\eps|^2)^2\, d\h^1\lesssim  M_\eps.
\end{equation}
Recall that $\mathcal R$ is bi-Lipschitz homeomorphic to the unit ball $B$, in particular it is simply connected. 
Moreover by Theorem~\ref{thm:apriori} if $\kappa$ is small enough then $u_\e$ does not vanish. So we may write
\begin{equation*}
u_\eps=\rho_\eps e^{i\f_\eps} \quad \text{in } \mathcal R,
\end{equation*}
with $\rho_\eps, \f_\eps\in H^1(\mathcal R)$ (moreover, $\rho_\eps^2$ and $\f_\eps$ are smooth in $\mathcal R$ as $u_\eps$ is smooth by standard elliptic regularity). The system \eqref{eq:euler} writes in terms of $\rho_\eps$ and $\f_\eps$:
\be
\label{PDE_rho_phi}
\begin{cases}
&-\Delta \rho_\eps+\rho_\eps|\nabla \f_\eps|^2=\frac1{\eps^2} \rho_\eps (1-\rho_\eps^2)\\
&\di (\rho_\eps^2 \nabla \f_\eps)=0
\end{cases}
\quad \textrm{in } \mathcal R.
\ee

\nd {\bf Step 1.} {\it We prove the following estimate
\footnote{For the general energy \eqref{enF}, no modification is required for this step since the equation satisfied by $\varphi_\e$ stays the same.}
 of $\nabla \f_\eps$ in $L^q(\mathcal R)$, where $q=4-$: 
\be
\label{phi_w1q}
 \|\nabla \f_\eps\|_{L^q(\mathcal R)}\lesssim 1+{N_\eps}^{\frac12}+ {M_\eps}^{\frac12}.
 \ee
}
Indeed, by \eqref{bdry}, \eqref{delta}, Lemma \ref{lem_normal} and \eqref{choi}, we note that
\be
\label{auxi}
\begin{aligned}
\int_{\partial \Om\cap  \mathcal R} |\nabla \f_\eps|^2\, d\h^1 & \lesssim \int_{\partial \Om} |\nabla u_\eps|^2\, d\h^1\lesssim N_\eps+M_\eps \\
\text{ and } \, \int_{\Om\cap \partial \mathcal R} |\nabla \f_\eps|^2\, d\h^1 &\lesssim \int_{\partial \mathcal R \cap \Om} |\nabla u_\eps|^2\, d\h^1\lesssim M_\eps.
\end{aligned}
\ee
Therefore, by the Poincar\'e-Wirtinger inequality, up to adding a constant to $\varphi_\e$, we can assume that 
\be
\label{estim_lift}
\|\f_\eps\|_{H^1(\partial \mathcal R)}\lesssim 1+N_\eps^{\frac12}+ {M_\eps}^{\frac12}.
\ee
By the theory of traces in Lipschitz domains (see e.g. \cite[Section VI.2]{jonssonwallin}), for
  $s=1-$ there is a continuous extension operator from $H^s(\partial \mathcal R)$ to $H^{s+1/2}(\mathcal R)$, and its operator norm is bounded by a constant depending only on the Lipschitz regularity of $\mathcal R$, hence only on the Lipschitz regularity of $\Omega$. Thus there exists an extension $\Phi\in H^{\frac 32 -}(\mathcal R)$ of $\f_\eps\big|_{\partial \mathcal R}$ such that 
\begin{equation*}
\norm{\Phi}_{H^{\frac 32 -}(\mathcal R)}\lesssim 1+ N_\e^{\frac 12}+ {M_\eps}^{\frac12}.
\end{equation*} 
By Sobolev embedding $H^{\frac 12-}(\mathcal R)\subset L^{4-}(\mathcal R)$ we deduce the bound
\begin{equation}\label{estim_ext}
\norm{\nabla \Phi}_{L^q(\mathcal R)}\lesssim 1+ N_\e^{\frac 12}+ {M_\eps}^{\frac12}.
\end{equation}
The constant in the Sobolev embedding depends only on the Lipschitz regularity of $\Omega$ since $\mathcal R$ is bi-Lipschitz homeomorphic to the unit ball (with Lipschitz constants depending only on the Lipschitz regularity of $\Omega$).
Denoting
$$\psi:=\varphi_\e-\Phi\in H^1_0(\mathcal R),$$
by \eqref{PDE_rho_phi}, $\psi$ solves
\begin{equation*}
\Delta\psi=\nabla\cdot \left( (1-\rho_\e^2)\nabla\varphi_\e -\nabla\Phi\right)\quad\text{in }\quad \mathcal R,
\end{equation*}
so that elliptic estimates in Lipschitz domains \cite{JK81,savare98} yield
\begin{equation*}
\norm{\nabla\varphi_\e}_{L^q(\mathcal R)}\leq C(1 + \norm{(1-\rho_\e^2)\nabla\varphi_\e}_{L^q(\mathcal R)} + \norm{\nabla \Phi}_{L^q(\mathcal R)}).
\end{equation*}
By Theorem~\ref{thm:apriori}, $C\abs{1-\rho_\e^2}\leq \frac12$ in $\mathcal R$ for $\kappa>0$ small enough.  This implies
\begin{equation*}
\norm{\nabla\varphi_\e}_{L^q(\mathcal R)}\lesssim 1 + \norm{\nabla \Phi}_{L^q(\mathcal R)}.
\end{equation*}
The last term can be estimated by \eqref{estim_ext} and this proves \eqref{phi_w1q}.

\bigskip

\nd {\bf Step 2.} {\it An improved local estimate
 of the potential energy}. 
We will prove the following:

\begin{lem}
\label{lem:pot_en}
Let $\Omega\subset \R^2$ be a Lipschitz bounded domain. 
If $u_\eps$ satisfies \eqref{eq:euler} and \eqref{regime}, then 
$$\frac1{\eps^2} \int_{\mathcal R} (1-|u_\eps|^2)^2\, dx\lesssim \eps^{1-}(1+N_\eps+M_\eps)(1+M_{\eps})^{\frac12-},$$
for every point $x_0\in \Om$ with the associated domain $\mathcal R$ in \eqref{dr}.
\end{lem}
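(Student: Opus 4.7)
My plan is to test the modulus equation in \eqref{PDE_rho_phi} against a well-chosen function that both produces $(1-\rho_\eps^2)^2/\eps^2$ on the right-hand side and gives a Dirichlet term with the right sign on the left. The choice $\xi_\eps := (1-\rho_\eps^2)/\rho_\eps$ does exactly this: it is bounded since $\rho_\eps \geq 1/2$ by Theorem~\ref{thm:apriori}, satisfies $\xi_\eps \rho_\eps = 1-\rho_\eps^2$, and has the key property $\nabla\xi_\eps = -(1+1/\rho_\eps^2)\nabla\rho_\eps$. Multiplying $-\Delta\rho_\eps + \rho_\eps|\nabla\f_\eps|^2 = \rho_\eps(1-\rho_\eps^2)/\eps^2$ by $\xi_\eps$, integrating by parts on $\mathcal R$, and discarding the non-negative Dirichlet term on the left, one would obtain
$$
\frac{1}{\eps^2}\int_{\mathcal R}(1-\rho_\eps^2)^2\, dx \leq \int_{\mathcal R}(1-\rho_\eps^2)|\nabla\f_\eps|^2\, dx + \Bigl|\int_{\partial\mathcal R}\xi_\eps\,\partial_\nu\rho_\eps\, d\h^1\Bigr|.
$$

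The bulk term is where Step~1 would pay off. Since $\|\nabla\f_\eps\|_{L^{4-}(\mathcal R)}\lesssim 1+N_\eps^{1/2}+M_\eps^{1/2}$ by \eqref{phi_w1q}, one can apply H\"older with the conjugate exponent $p=2+$ instead of being stuck with $p=\infty$:
$$
\int_{\mathcal R}(1-\rho_\eps^2)|\nabla\f_\eps|^2 \leq \|1-\rho_\eps^2\|_{L^{2+}(\mathcal R)}\,\|\nabla\f_\eps\|_{L^{4-}(\mathcal R)}^{2}.
$$
The $L^{2+}$ norm would then be interpolated between $L^2$ and $L^\infty$: the definition of $M_\eps$ forces $\|1-\rho_\eps^2\|_{L^2(\mathcal R)}\lesssim \eps\, M_\eps^{1/2}$, while Theorem~\ref{thm:apriori} together with \eqref{upper} gives $\|1-\rho_\eps^2\|_{L^\infty(\mathcal R)}\lesssim 1$, so interpolation produces $\|1-\rho_\eps^2\|_{L^{2+}(\mathcal R)}\lesssim \eps^{1-}(1+M_\eps)^{(1/2)-}$. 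Combined with the $L^{4-}$ bound this yields exactly the claimed factor $\eps^{1-}(1+N_\eps+M_\eps)(1+M_\eps)^{1/2-}$.

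For the boundary term I would split $\partial\mathcal R$ into $\partial\Omega\cap\mathcal R$ and $\Omega\cap\partial\mathcal R$, then apply Cauchy--Schwarz on each piece using $|\xi_\eps|\lesssim |1-\rho_\eps^2|$ (from $\rho_\eps\geq 1/2$) and $|\partial_\nu\rho_\eps|\leq |\nabla u_\eps|$. On $\partial\Omega\cap\mathcal R$, $\int (1-|g_\eps|^2)^2 \lesssim \eps^2 N_\eps$ comes from \eqref{bdry} and $\int_{\partial\Omega}|\nabla u_\eps|^2\lesssim N_\eps+M_\eps$ from Lemma~\ref{lem_normal}; on $\Omega\cap\partial\mathcal R$, the good slice choice \eqref{choi} provides analogous bounds with $N_\eps$ replaced by $M_\eps$. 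Altogether the boundary contribution is of order $\eps(1+N_\eps+M_\eps)$, which is absorbed in the target. The main obstacle is really the bulk term: a naive $L^2$-H\"older would require $L^\infty$ control on $1-\rho_\eps^2$ and would only recover the logarithmic improvement already proved in Section~\ref{s:apriori}; the genuine power-of-$\eps$ gain comes entirely from the $q>2$ integrability of $\nabla\f_\eps$ delivered by Step~1.
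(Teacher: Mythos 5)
Your proof is correct and follows essentially the same route as the paper: test the modulus equation in \eqref{PDE_rho_phi} against a multiple of $1-\rho_\eps^2$, discard the sign-definite Dirichlet term, bound the boundary term by Cauchy--Schwarz using \eqref{bdry}, \eqref{choi}, \eqref{auxi} and Lemma~\ref{lem_normal}, and bound the bulk term by H\"older with $q=4-$ from \eqref{phi_w1q} plus $L^2$--$L^\infty$ interpolation. The only difference is your test function $(1-\rho_\eps^2)/\rho_\eps$ in place of the paper's $1-\rho_\eps^2$, which merely trades one application of $\rho_\eps\geq \frac12$ for another and changes nothing substantive.
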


\begin{proof}{}
Multiplying \eqref{PDE_rho_phi} by $1-\rho_\eps^2$, as $\rho_\eps\geq 1/2$ in $\mathcal R$ (by Theorem \ref{thm:apriori}), integration by parts yields\footnote{For the general energy \eqref{enF}, this estimate holds thanks to the assumption $(s-1)F'(s)\gtrsim (s-1)^2$ for $s\geq 0$.} 
\begin{align*}
\frac{1}{2\e^2}\int_{\mathcal R} (1-\rho_\eps^2)^2 \, dx & \leq \frac{1}{\e^2}\int_{\mathcal R} \rho_\eps(1-\rho_\eps^2)^2\, dx\\
&=-\int_{\mathcal R} (1-\rho_\eps^2)\Delta\rho_\eps \, dx
+\int_{\mathcal R} \rho_\eps(1-\rho_\eps^2)|\nabla\varphi_\eps|^2\, dx \\
&= -\int_{{\partial \mathcal R}}  (1-\rho_\eps^2)\partial_\nu\rho_\eps \, d\h^1- 2\int_{\mathcal R}\rho_\eps\abs{\nabla\rho_\eps}^2 \, dx +\int_{\mathcal R} \rho_\eps(1-\rho_\eps^2)\abs{\nabla\varphi_\eps}^2\, dx\\
&\leq \norm{1-\rho_\eps^2}_{L^2({\partial \mathcal R})} \norm{\partial_\nu\rho_\eps }_{L^2({\partial \mathcal R})}
 + 2\norm{\nabla\varphi_\eps}_{L^q(\mathcal R)}^2\norm{1-\rho_\eps^2}_{L^{\frac{q}{q-2}}(\mathcal R)}\\
&\lesssim \eps (M_\eps+N_\eps) +\eps^{1-}(1+N_\eps+ M_\eps)M_{\eps}^{\frac12-}
\end{align*}
for $q=4-$, where we used 

$\bullet$ \eqref{bdry} and \eqref{choi} yielding $\norm{1-\rho_\eps^2}_{L^2({\partial \mathcal R})}\lesssim  \eps (M_\eps+N_\eps)^{\frac12}$;

$\bullet$ \eqref{auxi} yielding $ \norm{\partial_\nu\rho_\eps }_{L^2({\partial \mathcal R})}\lesssim (M_\eps+N_\eps)^{\frac12}$;

$\bullet$ \eqref{phi_w1q}
and the interpolation inequality for $\lambda=\frac{2(q-2)}q=1-$
$$\norm{1-\rho_\eps^2}_{L^{\frac{q}{q-2}}(\mathcal R)}\leq \norm{1-\rho_\eps^2}_{L^\infty(\mathcal R)}^{1-\lambda}
\norm{1-\rho_\eps^2}_{L^2(\mathcal R)}^\lambda\stackrel{\eqref{int},\eqref{upper}}{\lesssim} \e^\lambda M_\e^{\frac\lambda2}$$
yielding the last estimate.
\qed
\end{proof}

\medskip

\nd {\bf Step 3}. {\it Conclusion of Theorem \ref{thm:main}}. 
Applying the arguments in the proof of Lemma~\ref{lemma:lower-bound} in the domain $\mathcal R=\mathcal R_t(x_0)$ defined at \eqref{dr}, we find
\begin{equation*}
(\abs{u_\e(x_0)}^2-1)^{6+} \lesssim \frac{1}{\e^2}\int_{\mathcal R} (1-\rho_\eps^2)^2 \, dx\lesssim
\eps^{1-}(1+N_\eps+ M_\eps)(1+M_{\eps})^{\frac12-}.
\end{equation*}
The last inequality follows from the previous step. Since $x_0\in\Omega$ is arbitrary and the constant depends only on the Lipschitz regularity of $\Omega$, this proves Theorem~\ref{thm:main}. 
\qed

\medskip

\section{Optimality of the regime \eqref{regime}}
\label{sec:optimal}

In this section, we prove Propositions \ref{p:optimal} and \ref{p:optimal2}:
 
\begin{proof}{ of Proposition~\ref{p:optimal}}
Let $\Omega$ be a cone of opening angle $\theta_0$ and height $1$, see Figure \ref{fig1}. 
Consider the point $P_\e$ on the medial axis at distance $s_\eps$ from the corner, where
$$s_\e=\e^\mu \quad \textrm{ with } 0<\mu<1.$$
Set $\alpha=\frac{1+\mu}{2}\in (0,1)$. We also denote by $d_\eps$ the distance of $P_\eps$ to the boundary $\dOm$. For $\theta_1=\theta_0+\eta$ (where, possibly lowering $\eta$, we may assume $\eta<\theta_0$) consider the cone $K_1$ of opening $\theta_1$ and height $1$ centered at $P_\e$ and with the same medial axis.
The boundaries of the two cones intersect in two points at a distance $r_\e$ from $P_\e$. It follows 
that $\Omega\subset B(P_\e, {r_\e})\cup K_1$ (as $s_\eps< r_\eps$),
$$d_\e=s_\e\sin \frac{\theta_0}{2}\sim \eps^\mu \quad \textrm{and}\quad r_\e=s_\e \frac{\sin \frac{\theta_0}{2}}{\sin \frac\eta2}\sim \eps^\mu.$$

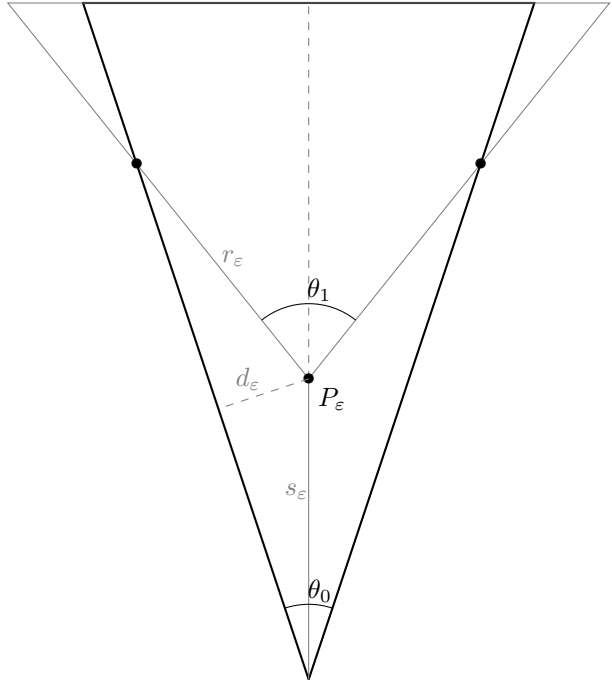
\begin{figure}[htbp]
\label{fig1}
\caption{The cones $\Omega$ and $K_1$ of opening angles $\theta_0$ and $\theta_1$ respectively.}

\begin{center}
\begin{tikzpicture}[scale=1]

\draw[thick] (0,0) -- (3,9) -- (-3,9) -- (0,0);
\draw[gray] (0,0) -- (0,4);
\draw[gray,dashed] (0,4) -- (0,9);
\draw[gray] (0,4) -- (4,9) -- (-4,9) -- (0,4);
\draw (0,4) node {$\bullet$} node[below right] {$P_\varepsilon$};
\draw (2.286,6.857) node {$\bullet$};
\draw (-2.286,6.857) node {$\bullet$};
\draw (0,0)++(71.565:1) arc (71.565:108.435:1);
\draw (0.15,1.18) node {$\theta_0$};
\draw (0,4)++(51.34:1) arc (51.34:128.66:1);
\draw (0.14,5.18) node {$\theta_1$};
\draw[gray,dashed] (0,4) -- (-1.2,3.6);
\draw[gray] (-.8,4) node {$d_\varepsilon$};

\draw[gray] (-.16,2.5) node {$s_\varepsilon$};
\draw[gray] (-1,5.6) node {$r_\varepsilon$};
\end{tikzpicture}
\end{center}
\end{figure}
We consider the following degree-one vortex solution $u_\eps$ of \eqref{eq:euler}:
\[u_\e(x)=f\bigg(\frac{|x-P_\e|}{\e}\bigg) \frac{x-P_\e}{|x-P_\e|} \quad \textrm{for every} \quad x\in \R^2,
\]
where $P_\eps$ is the vortex point (i.e., $u_\eps(P_\eps)=0$), $f:[0,\infty)\to[0,1)$ is the smooth radial profile given by the unique solution of 
\[
-f''-\frac1r f'+\frac1{r^2}f=f(1-f^2) \quad \textrm{for every} \quad r\in (0, \infty),
\]
with $f(0)=0$ and $\lim_{r\to\infty} f(r)=1$; $f$ and $f'$ have the following asymptotics for $r\to\infty$ (see 
\cite{ChenElliottQi:1994a, HerveHerve:1994a})
\[
f(r)=1-\frac1{2r^2}-\frac9{8r^4} +O(r^{-6}), \quad f'(r)= \frac1{r^3}+\frac9{2r^5} +O(r^{-7}).
\]
In a point $x\in \R^2$ with $|x-P_\e|=t$, the Ginzburg-Landau energy density is given by
\[
e_\e(u_\e(x)) = \frac12 \left(\frac{|f'(\frac{t}{\e})|^2}{\e^2}+ \frac{|f(\frac t \eps)|^2}{t^2} \right)  + \frac1{4\e^2} \left(1-|f(\textstyle\frac t \eps)|^2\right)^2,
\]
so that for $t\geq \e$, we find
\be
\label{13}
e_\e(u_\e(x)) 
=    \frac1{2t^2} + \frac1{\e^2}O(\frac{ \e^4}{t^4})
\ee
Recalling that $r_\e\gg \e$, we obtain by integrating over $K_1\setminus B(P_\e, {r_\e})$: 
\[
\int_{K_1\setminus B(P_\e, {r_\e})} e_\e(u_\e) \, dx \le \theta_1\int_{r_\e}^2 t
\left(\frac1{2t^2}  + \frac1{\e^2}O(\frac{ \e^4}{t^4})\right)\, dt
\le  \frac{\theta_1}{2} \log\frac2{r_\e}  +O(\frac{\e^2}{r_\e^2}).
\] 
In $B(P_\e, {r_\e})$, using \eqref{13} and the fact that $f(0)=0$ and $|f'|\lesssim 1$ (in particular, $|f(t)|\lesssim t$ for $t>0$), we estimate 
\begin{align*}
\int_{ B(P_\e, {r_\e})} e_\e(u_\e) \, dx &
  \leq \pi \bigg(\int_0^\eps+\int_\eps^{r_\eps}\bigg) \left[\left(\frac{|f'(\frac{t}{\e})|^2}{\e^2}+ \frac{|f(\frac t \eps)|^2}{t^2} \right)  + \frac1{2\e^2} \left(1-|f(\textstyle\frac t \eps)|^2\right)^2\right] t\, dt \\
&
 \le C\int_0^\eps \frac{t}{\eps^2} \, dt  +\pi \log \frac{r_\e}{\e}+O(1)=\pi \log \frac{r_\e}{\e}+O(1).
\end{align*}
As $\Omega\subset B(P_\e, {r_\e})\cup K_1$, it follows that the interior  energy $M_\eps$ is estimated as:
\[
M_\e = \int_{\Omega} e_\e(u_\e)\, dx \le \pi \log \frac{r_\e}{\e}+\frac{\theta_1}{2} \log\frac2{r_\e}+O(1)\leq\left(\pi (1-\mu) + \frac{\theta_1}{2}\mu\right) |\log\e|+C
\]
where $C>0$ is a constant depending only on $\eta$ and $\theta_0$. Note that for $\mu$ sufficiently close to $1$ and $\e$ small enough, this implies
\[
M_\e \le (\frac{\theta_0} 2+\eta) |\log\e|.
\]
To estimate the boundary energy $N_\eps$, we write $\partial\Omega=\Gamma_1\cup\Gamma_2^+\cup\Gamma_2^-$, where $\Gamma_1$ is the basis of the cone, and $\Gamma_2^\pm$ are the two sides of the cone adjacent to its vertex. Since $P_\e$ is at distance $\sim 1$ of $\Gamma_1$, it holds
\begin{equation*}
\int_{\Gamma_1} e_\e(u_\eps)\, d\mathcal H^1 = O(1).
\end{equation*} 
On the rest of the boundary, note that for every point $x\in\Gamma_2^\pm$ that has a distance $s$ from the orthogonal 
projections of $P_\e$ onto $\Gamma_2^\pm$, we have 
\[
e_\e(u_\e(x)) =  \frac1{2t^2}  + \frac1{\e^2}O(\frac{ \e^4}{t^4}),\qquad\text{where }t=|x-P_\eps|=\sqrt{s^2+d_\e^2},
\]
since $t\geq d_\eps\sim \eps^\mu\gg \eps$.
We can thus estimate
\[
N_\e \le 2 \int_{-\infty}^\infty \left(\frac{1}{2(s^2+d_\e^2)}  +C\frac{\e^2}{(s^2+d_\e^2)^2} \right)\, ds +O(1)
\lesssim \frac1{d_\e}
+\frac{\eps^2}{d_\eps^3}\lesssim\frac1{s_\e}\sim \frac1{\eps^\mu}\ll\frac1{\eps^\alpha}
\]
as $\alpha$ was chosen such that $\alpha=\frac{1+\mu}{2}<1$. 
So \eqref{regime} holds with $\kappa=\frac{\theta_0} 2+\eta$, while $u_\e(P_\e)=0$.\qed

\end{proof}
\begin{rem}
Applying the construction in the proof of Proposition~\ref{p:optimal} to a half-space domain, we deduce that a necessary condition in order that Theorem \ref{thm:main} holds true is given by 
$\kappa\leq \frac\pi 2$ in \eqref{regime} (even for smooth domains $\Omega$). 
\end{rem}

\begin{proof}{ of Proposition~\ref{p:optimal2}}
Let $f:[0,\infty)\to[0,1]$ be a smooth function with $f(0)=0$, $f(r)=1$ for $r\ge 1$ and $|f'(r)|\le C$. Let $x_0\in\dOm$ and 
consider $v_\e(x)=f(\frac{x-x_0}{\e})$ for every $x\in \R^2$. Let $g_\e=(v_\e,0)$ on $\dOm$ and let $u_\e$ be a minimizer of the Ginzburg-Landau energy
with Dirichlet boundary conditions $g_\e$, in particular, $u_\eps(x_0)=g_\eps(x_0)=0$. Then $u_\e$ satisfies \eqref{eq:euler} and (by minimality)
$M_\e\le E_\e(v_\e;\Omega)\lesssim 1$ while $N_\e \lesssim \frac1\e$. \qed
\end{proof}

\section*{Acknowledgements}
We thank Petru Mironescu, Roger Moser and Luc Nguyen for interesting comments. R.I. acknowledges partial support by the ANR project ANR-14-CE25-0009-01.  X.L. acknowledges partial support by the ANR project ANR-18-CE40-0023.

\bibliographystyle{acm}

\bibliography{bibi}

\end{document}